\numberwithin{equation}{section}
\newtheorem{thm}[subsubsection]{Theorem}
\newtheorem{lem}[subsubsection]{Lemma}
\newtheorem{cor}[subsubsection]{Corollary}
\newtheorem{prop}[subsubsection]{Proposition}
\theoremstyle{definition}
\newtheorem{defn}[subsubsection]{Definition}
\newtheorem{rem}[subsubsection]{Remark}
\def\cL{{\mathcal L}}
\newcommand{\sq}{\mathrel{\square}}
\newcommand{\cF}{\ensuremath{\mathcal{F}}}
\newcommand{\C}{\ensuremath{\mathbb{C}}}
\newcommand{\R}{\ensuremath{\mathbb{R}}}
\newcommand{\N}{\ensuremath{\mathbb{N}}}
\newcommand{\cG}{\ensuremath{\mathcal{G}}}
\newcommand{\Ranp}{\ensuremath{\mathbb{R}_{an}^{pow}}}
\newcommand{\aF}{\ensuremath{\frac{|F^{(\alpha)}_t (0)|}{\alpha !}}}
\newcommand{\h}{\ensuremath{\frac{1}{2}}}
\def\NN{\mathbb{N}}
\def\RR{\mathbb{R}}
\def\ZZ{\mathbb{Z}}
\def\QQ{\mathbb{Q}}
\def\qed{${\vcenter{\vbox{\hrule height .4pt
           \hbox{\vrule width .4pt height 4pt
            \kern 4pt \vrule width .4pt}
             \hrule height .4pt}}}$}
\def\mqed{{\vcenter{\vbox{\hrule height .4pt
           \hbox{\vrule width .4pt height 4pt
            \kern 4pt \vrule width .4pt}
             \hrule height .4pt}}}}
\def\hash{\#}
\title[Parameterizations and diophantine applications]{Uniform parameterization of subanalytic sets and diophantine applications}
\author[Cluckers]{Raf Cluckers}
\address{Universit\'e de Lille,
Laboratoire Painlev\'e,
 CNRS - UMR 8524, Cit\'e Scientifique, 59655
Villeneuve d'Ascq Cedex, France, and,
KU Leuven, Celestijnenlaan 200B, B-3001 Leu\-ven, Bel\-gium}
\email{Raf.Cluckers@univ-lille.fr}
\urladdr{http://rcluckers.perso.math.cnrs.fr/}
\author[Pila]{Jonathan Pila}
\address{Mathematical Institute,
University of Oxford,
Andrew Wiles Building,
Radcliffe Observatory Quarter,
Woodstock Road,
Oxford,
OX2 6GG}
\email{pila@maths.ox.ac.uk}
\author[Wilkie]{Alex Wilkie}
\address{Mathematical Institute,
University of Oxford,
Andrew Wiles Building,
Radcliffe Observatory Quarter,
Woodstock Road,
Oxford,
OX2 6GG}
\email{wilkie@maths.ox.ac.uk}
\begin{document}

\thanks{The authors would like to thank EPSRC for supporting parts of the work under grants EP/J019232/1 and EP/N008359/1, and the Leverhulme Trust for the Emeritus Fellowship EM/2015-002.
The author R.C.~was supported by the European Research Council under the European Community's Seventh Framework Programme (FP7/2007-2013) with ERC Grant
Agreement nr. 615722 MOTMELSUM, and thanks the Labex CEMPI  (ANR-11-LABX-0007-01). The authors would like to thank the referees for numerous valuable suggestions. }

\begin{abstract}
We prove new parameterization theorems for sets definable in the structure $\R_{an}$ (i.e. for globally subanalytic
sets) which are uniform for definable families of such sets. We treat both $C^r$-parameterization and (mild) analytic parameterization.
In the former case we establish a polynomial
(in $r$) bound (depending only on the given family) for the number of parameterizing functions.
However, since uniformity is impossible in the latter case (as was shown by Yomdin via a very simple
family of algebraic sets), we introduce a new notion, {\it analytic quasi-parameterization} (where many-valued
complex analytic functions are used),
which allows us to recover a uniform result.

We then give some diophantine applications motivated by the question as to whether the $H^{o(1)}$ bound in the Pila-Wilkie counting theorem
 can be improved, at least for certain reducts of $\R_{an}$. Both parameterization results are shown to give uniform
$(\log H)^{O(1)}$ bounds
 for the number of rational points of height at most $H$ on  $\R_{an}$-definable Pfaffian surfaces. The quasi-parameterization
 technique produces the sharper result, but the uniform $C^r$-parameterization theorem has the advantage of also applying to $\R_{an}^{pow}$-definable
 families.
\end{abstract}

\keywords{Rational points of bounded height, $C^r$-parameterizations, quasi-parameterizations, pre-parameterizations, subanalytic sets, restricted Pfaffian functions, weakly mild functions, a-b-m functions, Gevrey functions, power maps, entropy, dynamical system theory, Weierstrass preparation. \textit{French keywords.} Points rationels de hauteur bornée, paramétrisations quasi-analytiques, paramétrisations $C^r$, pré-paramétrisations,  ensembles sous-analytiques, fonctions Pfaffiennes restreintes, fonctions faiblement douces, fonctions a-b-m, fonctions de Gevrey, entropie, théorie des systèmes dynamiques, préparation de Weierstrass.}

%\subjclass[2000]{Primary   ; Secondary }

\dedicatory{\textit{French title.} Paramétrisations uniformes d'ensembles sous-analytiques et applications diophantiennes. }

\maketitle

{\footnotesize{\textit{Résumé.}
Nous démontrons de nouveaux résultats de paramétrisations d'ensembles définissables dans $\R_{an}$ (aussi appelés ensembles sous-analytiques globales), uniformément dans les familles définissables. Nous traitons les paramétrisations $C^r$ ainsi que les paramétrisations douces et analytiques.  Dans le cas $C^r$, nous obtenons une borne polynômiale (en $r$, et dépendant seulement de la famille) pour le nombre de fonctions paramétrisantes. Dans le cas de paramétrisations analytiques, comme l'uniformité est impossible (démontré par Yomdin
pour une famille semi-algébrique très simple), nous introduisons une nouvelle notion de {\it paramétrisations quasi-analytiques} (utilisant les fonctions analytiques complex multi-valuées), ce qui nous permet d'obtenir des résultats uniformes.
Ensuite nous donnons des applications diophantiennes motivées par la question si la borne $H^{o(1)}$ dans le théorème de comptage de Pila-Wilkie  peut être améliorée pour certaines réductions de la structure $\R_{an}$.
Nos deux approches de paramétrisations nous permettent d'obtenir de bornes uniformes de grandeur $(\log H)^{O(1)}$ pour le nombre de points rationels de hauteur au maximum $H$ sur les surfaces Pfaffiennes qui sont $\R_{an}$-définissables. Les paramétrisations quasi-analytiques nous donnent des résultats plus fins, mais les paramétrisations $C^r$ ont l'avantage de fonctionner aussi dans le cadre plus général de familles   $\R_{an}^{pow}$-définissables.}}

\section{Introduction}

The aim of this section is to give an informal account of the results appearing in this paper. Precise definitions
and statements are given in the next section.

So, we are concerned with {\it parameterizations} of bounded definable subsets of real euclidean space. The definability here
is with respect to some fixed (and, for the moment, arbitrary) o-minimal expansion of the real field.
By a parameterization of such a set $X \subseteq \R^n$, we mean a finite collection of definable maps from $(0, 1)^m$ to $\R^n$,
where $m:= \text{dim}(X)$, whose ranges cover $X$. The fact that parameterizations always exist is an easy consequence of the cell
decomposition theorem, but the aim is to construct them with certain differentiability conditions imposed on the parameterizing functions together
with bounds on their derivatives. The first result in this generality was obtained in \cite{PW} (by adapting methods of Yomdin \cite{YY} and Gromov \cite{gromov}
who dealt with the semi-algebraic case), where it was shown that for each positive integer $r$ there exists a parameterization consisting
of $C^r$ functions all of whose derivatives (up to order $r$) are bounded by $1$. Further, the parameterizing functions
may be found {\it uniformly}. This means that if $\mathcal{X} = \{ X_t : t \in T \}$ is a definable family of $m$-dimensional
subsets of $(0, 1)^n$ (say), i.e. the relation ``$t \in T \hspace{2mm} \text{and} \hspace{2mm} x \in X_t$'' is definable in both $x$ and $t$,
then there exists a positive integer $N_r$ such that for each $t \in T$, at most $N_r$ functions are required to parameterize
$X_t$ and each such function is definable in $t$.  (The bound $N_r$ does, of course, also depend on
the family $\mathcal{X}$, but we usually suppress this in the notation. The point is that it is independent of $t$.)
Unfortunately, the methods of \cite{PW} do not give an explicit
bound for $N_r$ and it is the first aim of this paper to do so in the case that the ambient o-minimal  structure
is the restricted analytic field $\R_{an}$  (where the bounded definable sets are precisely the bounded subanalytic sets), or a suitable reduct of it.
We prove, in this case,
that $N_r$ may be taken to be a polynomial in $r$ (which depends only on the given family $\mathcal{X}$). While we have only diophantine
applications in mind here, this result already gives a complete answer to an open question, raised by Yomdin, coming from
the study of entropy and dynamical systems (see e.g. \cite{YY}, \cite{YY2}, \cite{gromov}, \cite{Burguet-Liao-Yang}). In fact, even in the case that the ambient structure is just the
ordered field of real numbers (which is certainly a suitable reduct of $\R_{an}$ to which our result applies), the polynomial bound appears to be new
and, indeed, gives a partial answer to a question raised in \cite{Burguet-Liao-Yang} (just below Remark 3.8); the essential missing ingredient to solve this question completely is an effective form of the preparation result of \cite{MillerD} in the semi-algebraic case.
Our uniform $C^r$-parameterization theorem also holds
for the expansion of $\R_{an}$ by all power functions (i.e. the structure usually denoted $\R_{an}^{pow}$) and suitable reducts (to be clarified
in section 2) of it. In fact, we obtain a pre-parameterization result in Section \ref{sec:a-b-m} which underlies $C^r$-parameterizations.

Next we consider {\it mild} parameterizations. Here it is more convenient to consider parameterizing functions with domain $(-1, 1)^m$
(where $m$ is the dimension of the set being parameterized) and we demand that they are $C^{\infty}$ and we put
a bound on {\it all} the derivatives. We shall only be concerned with functions that satisfy a so called $0$-mild condition, namely
that there exists an $R>1$ such that for each positive integer $d$, all their $d$'th derivatives have a bound of order $R^{-d} \cdot d!$ (which in fact forces the functions
to be real analytic). It was shown in \cite{JMT} that any reduct of $\R_{an}$ has the $0$-mild parameterization property: every definable
subset of $(-1, 1)^n$ has a parameterization by a finite set of $0$-mild functions. However, this result
cannot be made uniform. For Yomdin  showed in \cite[Proposition 3.3]{Y3} (see also \cite[page 416]{Y2}) that the number of $0$-mild functions required to parameterize the set
$\{ \langle x_1 , x_2 \rangle \in (-1, 1)^2 : x_1 \cdot x_2 = t \}$ necessarily tends to infinity as $t \to 0$. Our second parameterization
result recovers uniformity in the $0$-mild setting but at the expense of, firstly, covering larger sets than, but ones having the same dimension as, the sets in the given
family and secondly, covering not by ranges of $0$-mild maps but by solutions to (a definable family of) Weierstrass polynomials with
$0$-mild functions as coefficients.

\vspace{2mm}

In \cite{PW} the parameterization theorem is applied to show that any definable subset of $(0, 1)^n$ (the ambient o-minimal
structure being, once again, arbitrary) either contains an infinite semi-algebraic subset or else, for all $H \geq 1$, contains at most
$H^{o(1)}$  rational points whose coordinates have denominators bounded by $H$. (For the purposes of this introduction we refer
to such points as $H$-{\it bounded} rational points.) Although this result is best possible in general, and is so even for
one dimensional subsets of $(0, 1)^2$ definable in the structure
$\R_{an}$, it has been conjectured that the $H^{o(1)}$ bound may be improved to $(\log H)^{O(1)}$ for certain reducts
of $\R_{an}$ (specifically, for sets definable from restricted {\it Pfaffian} functions), and it is our final aim in this paper to take a small step towards such a conjecture.

We first observe that the point counting theorem from \cite{PW} quoted above follows (by induction on dimension)
from the following uniform result (the main lemma of \cite{PW} on page 610). Namely, if  $m<n$ and $\mathcal{X} = \{ X_t : t \in T \}$ is a definable family of $m$-dimensional
subsets of $(0, 1)^n$, and $\epsilon > 0$,  then there exists a positive integer $d= d(\epsilon, n)$ such
that for each $t \in T$ and for all $H \geq 1$, all the $H$-bounded rational points of $X_t$ are contained in the union of at most $O(H^{\epsilon})$ algebraic
hypersurfaces of degree at most $d$, where the implied constant depends only on $\mathcal{X}$ and $\epsilon$. Now, for the structure $\R_{an}^{pow}$ (or any
of its suitable reducts), our uniform
$C^r$-parameterization theorem allows us to
improve the bound here on the number of hypersurfaces to $O((\log H)^{O(1)})$ (for $H > e$, with the implied constants depending only on the family
$\mathcal{X}$) but, unfortunately, their degrees have this order of magnitude too. Actually, the bound on the degrees is completely explicit, namely
$[(\log H)^{m/(n-m)}]$, but as this tends to infinity with $H$, the inductive argument used in \cite{PW} (where the degree $d$ only depended on $\epsilon$ and $n$)
breaks down at this point.  Our $0$-mild (quasi-) parameterization theorem does give a better result for (suitable reducts of) the structure $\R_{an}$ in that the
number of hypersurfaces is bounded by a constant (depending only on $\mathcal{X}$), but the bound for their degrees is the same as above and so, once again,
the induction breaks down.

We can, however, tease out a uniform result for rational points on certain one and two dimensional sets definable from
restricted Pfaffian functions, but for the general conjecture a completely new uniform parameterization theorem that applies to
the intersection of a definable set of constant complexity
with an algebraic hypersurface of {\it nonconstant} degree is badly needed.

\medskip

{\bf Note.\/} As this paper was being finalised, the arXiv preprints \cite{BN1}, \cite{BN2} appeared.
There is some similarity in the methods used there and the complex analytic approach here.
There seems to be no inclusion in either direction in the parameterization results nor in
 the diophantine applications; the diophantine result of \cite{BN2}  deals with sets of
arbitrary dimension but in a smaller reduct of $\R_{an}$.

\section{Precise statements}

\noindent
2.1 \hspace{1mm} {\bf $C^r$-parameterizations}

The largest expansion of the real field (that is, the expansion with the most definable sets) to which our
uniform $C^r$-parameterization theorem applies is the structure $\R_{an}^{pow}$, i.e. the expansion by all restricted analytic
functions and all power functions $(0, \infty ) \to (0, \infty ): x \mapsto x^s$ (for $s \in \R$).  However, when it comes
to  applications there is considerable advantage to be gained from working in suitable reducts of  $\R_{an}^{pow}$ for
which more effective topological and geometric information is available for the definable sets. (For example, for sets
definable from restricted Pfaffian functions one has, through the work of Khovanskii
(\cite{Khov}) and Gabrielov and Vorobjov (\cite{GV}), good bounds (in terms of natural data) on the number of their connected components.)

It turns out that for our proof here to go through, the property required of the ambient o-minimal structure is that
it should be a reduct of $\R_{an}^{pow}$ in which
a suitable version of the Weierstrass Preparation Theorem holds for definable functions. Now, a large class of such reducts
has been identified and extensively studied by D. J. Miller in his Ph.D. thesis (and in \cite{MillerD}), inspired by the results from \cite{LR} and \cite{Paru2} in the subanalytic case.
These are based on a language for functions in a {\it Weierstrass system} together with a certain class of power functions.
There is no need for us to go into precise definitions here-we will quote the relevant results from \cite{MillerD} when needed. Suffice it to say that examples
include the real ordered field itself, $\R_{an}$, $\R_{an}^{pow}$ or, indeed, the expansion of $\R_{an}$ by any collection of power functions that is closed
under multiplication, inverse and composition (i.e. such that the exponents form a subfield of $\R$). Many more examples appear in the literature (see \cite{Dries:Wei}, \cite{DenLip}
and \cite{MillerD}). We shall assume,  in the precise statement of the theorem below and throughout section 4, that all notions of definability
are with respect to some such fixed reduct of $\R_{an}^{pow}$:

\vspace{2mm}

\noindent
{\bf Convention 2.1.1} We fix a reduct of $\R_{an}^{pow}$ based on a Weierstrass system $\mathcal{F}$ and a
subfield $K$ of its field of exponents as described in \cite[Definition 2.1]{MillerD}. As there, we denote its language by $\mathcal{L}^{K}_{\mathcal{F}}$.
%Note that this includes the possibility that $\mathcal{L}^{K}_{\mathcal{F}}$ is $\R_{an}^{pow}$, and also the possibility that the $\mathcal{L}^{K}_{\mathcal{F}}$-definable sets are precisely the semi-algebraic sets.

\vspace{2mm}

\noindent
Note that, for the smallest possible choice of $\cF$ and $K=\QQ$, the $\mathcal{L}^{K}_{\mathcal{F}}$-definable sets are precisely the semi-algebraic sets. For the largest possible choice of $\cF$, and for $K=\RR$, one has that $\mathcal{L}^{K}_{\mathcal{F}}$ equals $\R_{an}^{pow}$.

\vspace{2mm}

\noindent
{\bf Definition 2.1.2}  Let $r$ be a non-negative integer or $+\infty$. The $C^r$-norm $\rVert f \lVert_{C^r}$
of a $C^r$-function $f : U \subset \R^m \to \R$, with $U$ open and nonempty, is defined (in $\R \cup \{ + \infty \}$) by
$$\sup_{x \in U} \sup_{\underset{\alpha \in \N^m}{|\alpha| \leq r}} |f^{(\alpha)} (x)|,$$
where $\NN$ is the set of nonnegative integers, and where we have used the standard multi-index notation, namely for $\alpha = \langle \alpha_1 , \ldots , \alpha_m \rangle \in \N^m$,
$f^{(\alpha)}$ stands for $\partial^{\alpha} f / \partial x^{\alpha}$ ($= f$ for $\alpha = 0$), and
$|\alpha|$ denotes $\sum_{i=1}^{m} \alpha_i$. By the $C^r$-norm
of a $C^r$-map $f : U \subset \R^m \to \R^s$, with $U$ open, we mean the maximum of the $C^r$-norms of the component functions of $f$.

\vspace{2mm}

\noindent
{\bf Theorem 2.1.3} (The uniform $C^r$-parameterization theorem.) {\it Let $n, k$ be positive integers and $m$ be a
nonnegative integer with $m \leq n$. Let $\mathcal{X} = \{ X_t : t \in T \}$ be an $\mathcal{L}^{K}_{\mathcal{F}}$-definable
family of $m$-dimensional subsets of $(0, 1)^n$, where $T$ is some $\mathcal{L}^{K}_{\mathcal{F}}$-definable subset
of $\R^k$. Then there exists positive numbers $c$ and $d$, depending only on the family $\mathcal{X}$, such that
for each positive integer $r$,  and for each $t \in T$, there exist analytic maps
$$\phi_{r, i, t} : (0, 1)^m \to X_t$$
for $i = 1, \ldots, cr^d$, whose $C^r$-norms are bounded by $1$ and whose ranges cover $X_t$. Moreover, for each $i$ and $r$,
$\{ \phi_{r,i,t} : t \in T \}$ is an $\mathcal{L}^{K}_{\mathcal{F}}$-definable family of maps. }

\vspace{2mm}

The proof of 2.1.3 is given in section 4 and relies on a pre-parameterization result (Theorem \ref{pre-param}) which underlies $C^r$-parameterizations for all $r$ via power maps.

\vspace{3mm}

\noindent
2.2 \hspace{1mm} {\bf Quasi-parameterization}

For the main result of this section we require our ambient o-minimal structure to be a reduct of $\R_{an}$: we do not know whether theorem 2.2.3 below (or some version
of it) holds if power functions with irrational exponents are admitted. We shall be working with complex valued definable functions
of several complex variables where the definability here is via the usual identification of $\C$ with $\R^2$. Naturally enough we will require
there to be enough {\it definable} holomorphic functions:

\vspace{2mm}

\noindent
{\bf Convention 2.2.1} We fix a reduct of $\R_{an}$ with the following property. If $f : U \subset \R^m \to \R$, with $U$ open, is a definable,
real analytic function, then for each $a \in U$ there exists an open $V \subset \C^m$ with $a \in V \cap \R^m \subset U$ and a {\it definable}
holomorphic function $\tilde{f} : V \to \C$ such that for all $b \in V \cap \R^m$, $\tilde{f}(b) = f(b)$. For the remainder of this subsection and throughout section 5
 (unless otherwise stated) definability will be with respect to this structure.

\vspace{2mm}

The main examples are the real field and $\R_{an}$ itself. Others may be constructed as follows. Let
 $\mathcal{F}$ be a collection of restricted (real) analytic functions closed under partial differentiation and under the operation implicit
in 2.2.1 (i.e. under taking the real and imaginary part functions of the local complex extensions). Then the expansion of the real field
by $\mathcal{F}$ will be a reduct of $\R_{an}$ satisfying 2.2.1. This follows fairly easily from the theorem of Gabrielov (\cite{Gab2}) asserting that
such a reduct is model complete. (For a local description of the complex holomorphic functions that are definable in such a structure (at least,
in a neighbourhood of a generic point) see \cite{Wilkie}.)

\vspace{1mm}

For $R>0$ we denote by $\Delta (R)$ the open disc in $\C$ of radius $R$ and centred at the origin.

\vspace{2mm}

\noindent
{\bf Definition 2.2.2} Let $R>0$, $K>0$ and let $m$ be a positive integer. Then a definable family $\Lambda = \{ F_t : t \in T \}$, where $T$ is a definable
subset of $\R^k$ for some $k$, is called an $(R, m, K)$-family if for each $t \in T$,  the function $F_t : \Delta (R)^m \to \C$ is holomorphic
and for all $z \in \Delta (R)^m$ we have $|F_t(z)| \leq K$.

\vspace{2mm}

We shall develop a considerable amount of theory for such families in section 5. To mention just one result, which is perhaps of independent
interest, we will show that if $R>1$ and, for each $t \in T$,
$$F_t(z) = \sum a_{\alpha}^{(t)} \cdot z^{\alpha}$$
is the Taylor expansion of $F_t$ for $z = \langle z_1 , \ldots, z_m \rangle$ in an open neighbourhood of $0 \in \C^m$ (where the summation is over
all $m$-tuples $\alpha = \langle \alpha_1, \ldots , \alpha_m \rangle
\in \N^m$), then there exists $M = M(\Lambda) \in \N$ such that $|a_{\alpha}^{(t)}|$ achieves its maximum value for some $\alpha$
with $|\alpha| \leq M$. (In addition to the multi-index notation introduced in 2.1.2 we write $z^{\alpha}$ for $z_1^{\alpha_1} \cdots z_m^{\alpha_m}$.)
The fact that $M$ is independent of $t$ here is crucial for all the uniformity results
that follow and leads to the following

\vspace{2mm}

\noindent
{\bf Theorem 2.2.3} (The quasi-parameterization theorem.) {\it Let $n$ and $m$ be non-negative integers with $m<n$ and let $\mathcal{X} =
\{ X_s : s \in S \}$ be a definable family of subsets of $[-1, 1]^n$, each of dimension at most $m$, where $S$ is a definable subset of $\R^k$ for some
$k$. Then there exists $R>1$, $K > 0$, a positive integer $d$ and an $(R, m+1, K)$-family $\Lambda = \{ F_t : t \in T \}$ such that each
$F_t$ is a monic polynomial of degree at most $d$ in its first variable and for all $s \in S$, there exists $t \in T$ such that
$$X_s \subseteq \{ x = \langle x_1 , \ldots , x_n \rangle \in [-1, 1]^n :
\exists w \in [-1, 1]^m \bigwedge_{i=1}^{n} F_t (x_i , w) =0 \}.$$}

The proof of 2.2.3 is given in section 5. However, some of the ideas involved can be illustrated by considering Yomdin's example mentioned in
section 1. Here $n=2$, $m=1$, $S = (0, 1)$ and $X_s = \{ \langle x_1 , x_2 \rangle \in (-1, 1)^2 : x_1 \cdot x_2 = s \}$ (for each $s \in S$).
If we take $T = (0,1)$ and $G_t (z_1 , z_2 ) = z_1^2 - z_2 z_1 + t$ (for $t \in T$), then for all $s \in S$, there exists $t \in T$ such that
$$X_s \subseteq \{ x = \langle x_1 , x_2 \rangle \in [-1, 1]^2 :
\exists w \in [-2, 2] \bigwedge_{i=1}^{2} G_t (x_i , w) =0 \}.$$

(Just take $t=s$ and then, for $\langle x_1,x_2 \rangle \in X_s$, take $w = x_1 + x_2$.)

\vspace{1mm}

The fact that $w$ may not lie in the required interval $[-1,1]$ is an annoying, but entirely superficial, difficulty
that can always be resolved by a process that we will refer to as ``translation and scaling''. In this case no scaling is required:
we just take our $F_t (z_1 , z_2 )$ to be \newline $G_t (z_1 , z_2 ) \cdot G_t (z_1 , z_2 +1) \cdot G_t (z_1 , z_2 -1)$, so that
$\{ F_t : t \in T \}$ is a $(2, 2, 1331)$-family each member of which is a monic polynomial in $z_1$ of degree $6$, and which now
has the required property exactly.

\vspace{2mm}

\noindent
2.3 \hspace{1mm} {\bf Diophantine applications}
The above parameterization results may be applied to
obtain results about the distribution of rational points
on definable sets.

The height of a rational number $q=a/b$ where $a,b\in \ZZ$
with $b>0$ and ${\rm gcd}(a,b)=1$ is defined to
be $H(q)=\max(|a|, b)$ and the height of a tuple
$q=(q_1,\ldots, q_n)\in \QQ^n$ is
$H(q)=\max(H(q_i), i=1,\ldots,n)$. For
$X\subset\RR^n$ we set
$$
X(\QQ, H)=\{x\in X\cap \QQ^n: H(x)\le H\}
$$
and define the {\it counting function\/}
$$
N(X,H)=\hash X(\QQ, H).
$$

It is convenient to express the diophantine applications in the
same settings as the corresponding parameterization results.
Thus 2.3.1 below considers a family
${\mathcal X}\subset T\times (0,1)^n$
of sets $X_t\subset (0,1)^n, t\in T$,
while 2.3.2 considers a family
${\mathcal X}\subset T\times [-1,1]^n$
of sets $X_t\subset [-1,1]^n, t\in T$,
in each case definable in a suitable (specified)
o-minimal structure.
As mentioned above, this is a superficial issue.
We assume that each fibre $X_t$ has dimension $m<n$.

By $[x]$ we denote
the integer part of a real number $x$: $[x]\in\ZZ$
and $[x]\le x < [x]+1$.

\medskip
\noindent
{\bf Theorem 2.3.1}
{\it Let ${\mathcal X}\subset T\times (0,1)^n$ be a family
of sets $X_t, t\in T$, of dimension $m$,
definable in $\RR_{an}^{pow}$. Then there exist positive constants
$C_1=C_1({\mathcal X}), c_1=c_1({\mathcal X})$
such that, for $H\ge e$ and $t\in T$, $X_t(\QQ,H)$
is contained in the union of the zero sets of at most
$$
C_1(\log H)^{c_1}
$$
non-zero polynomials with real coefficients of degree at most\/}
$$
[(\log H)^{m/(n-m)}].
$$

For a definable family in the smaller structure
$\RR_{an}$ we get a more precise result.

\medskip
\noindent
{\bf Theorem 2.3.2 \/}
{\it Let ${\mathcal X}\subset T\times [-1,1]^n$ be a family
of sets $X_t, t\in T$, of dimension $m$, definable in $\RR_{an}$.
Then there exists a positive constant
$C_2=C_2({\mathcal X})$
such that, if $H\ge e$ and $t\in T$ then
$X_t(\QQ, H)$ is contained in the union of the zero sets of at most
$$
C_2
$$
non-zero polynomials with real coefficients of degree at most\/}
$$
[(\log H)^{m/(n-m)}].
$$

\medskip

If these results could be iterated on the intersections
we would be able to prove a bound of the form $(\log H)^{O(1)}$
for rational points up to height $H$,
unless the set contained a positive-dimension a semi-algebraic subset,
as discussed in \S1.
However,  as the degrees of the
hypersurfaces increase with $H$, even a second iteration
would require a result for such non-definable families.

However, for certain families of Pfaffian sets of dimension 2
(see the basic definitions  below) we can carry this out
using estimates due to Gabrielov and Vorbjov \cite{GV}.
They have the right form of dependencies
to give a suitable result for the curves arising when the surface
is intersected with algebraic hypersurfaces of growing degree.
This idea has been used in several previous
papers \cite{Butler}, \cite{JMT}, \cite{JT}, \cite{P2}, \cite{Pila:Mild}.

\medskip
\noindent
{\bf Definition 2.3.3 \/}
A {\it Pfaffian chain\/} of order $r\ge 0$ and degree $\alpha\ge 1$
in an open domain $G\subset \RR^n$ is a sequence of analytic
functions $f_1,\ldots, f_r$ on $G$ satisfying differential equations
$$
df_j=\sum_{i=1}^ng_{ij}(x, f_1(x),\ldots, f_j(x))dx_i
$$
for $1\le j\le r$, where $g_{ij}\in \RR[x_1,\ldots, x_n, y_1,\ldots, y_j]$
are polynomials of degree not exceeding $\alpha$. A function
$$
f=P(x_1,\ldots, x_n, f_1,\ldots, f_r)
$$
where $P$ is a polynomial in $n+r$ variables with real coefficients
of degree not exceeding $\beta\ge 1$
is called a {\it Pfaffian function\/} of order $r$ and degree $(\alpha, \beta)$.
A {\it Pfaffian set\/} will mean the set of common zeros
of some finite set of Pfaffian functions.

\medskip

By $\RR_{\rm Pfaff}$ we mean the expansion of the real
ordered field $\RR$
by all Pfaffian functions $f:\RR^n\rightarrow \RR, n=1,2,\ldots$.
This is an o-minimal structure \cite{WilkiePfaff}.
The smaller o-minimal structure $\RR_{\rm resPfaff}$
is the expansion of $\RR$ by all functions of the form $f\vert_{[0,1]^n}$ where
$f: G\rightarrow\RR$ is a Pfaffian function and $[0,1]^n\subset G$.

The following notion of a ``Pfaffian surface'' is much more restrictive then
a two-dimensional set definable in $\RR_{\rm Pfaff}$.

\medskip

\noindent
{\bf Definition 2.3.4 \/}
By a {\it Pfaffian surface\/} we will mean the union of the
graphs in
$\RR^3$ of finitely many Pfaffian functions of two variables
with a common Pfaffian chain of order and degree $(r, \alpha)$,
defined on a ``simple'' domain $G$ in the sense of \cite{GV}.
Namely, a domain of the form $\RR^2, (-1,1)^2, (0, \infty)^2$
or $\{(u,v): u^2+v^2<1\}$.
We take the {\it complexity\/}
of the surface to be the triple $(r, \alpha, \beta)$,
where $\beta$ is the maximum of the degrees of the
Pfaffian functions defining the surface.

\medskip
\noindent
{\bf Definition 2.3.5 \/} Let $X\subset \RR^n$.
The {\it algebraic part\/}
of $X$, denoted $X^{\rm alg}$ is the union of all connected
positive dimensional semi-algebraic subsets of $X$.
The complement $X-X^{\rm alg}$ is called the {\it transcendental part\/}
of $X$ and denoted $X^{\rm trans}$.

\medskip

By combining 2.1.3 with the methods of \cite{P2}, \cite{Pila:Mild} we get
a uniform result  for a family of Pfaffian surfaces definable in
$\RR_{an}^{pow}$.
For an individual surface definable in the structure
$\RR_{\rm resPfaff}$ such a bound is due
to Jones-Thomas \cite{JT}.
%Note that a definable surface in $\RR_{\rm Pfaff}$
%is more general than a Pfaffian surface.
Perhaps a combination of the methods could give
uniformity for $\RR_{an}^{pow}$-definable families of
restricted-Pfaffian-definable sets
of dimension 2.

\medskip
\noindent
{\bf Proposition 2.3.6 \/} {\it
Let $r$ be a non-negative integer and $\alpha, \beta$
positive integers.
Let ${\mathcal X}\subset T\times (0,1)^3$ be a family
of surfaces $X_t, t\in T$, definable in
$\RR_{an}^{pow}$ such that each fibre $X_t$
is the intersection of $(0,1)^3$ with
a Pfaffian surface of complexity  (at most) $(r, \alpha, \beta)$.
Then there exist $C_3({\mathcal X}), c_3({\mathcal X})$
such that, for $H\ge e$ and $t\in T$,
$$
N(X_t^{\rm trans}, H)\le C_3(\log H)^{c_3}.
$$
}

\medskip

When the family is definable in $\RR_{an}$  we can prove a more
precise uniform result in which the exponent depends
only on the complexity of the Pfaffian surfaces.

\medskip
\noindent
{\bf Proposition 2.3.7 \/} {\it
Let $r$ be a non-negative integer and $\alpha, \beta$
positive integers.
Let ${\mathcal X}\subset T\times [-1,1]^3$ be a family
of surfaces $X_t, t\in T$, definable in
$\RR_{an}$ such that each fibre $X_t$
is the intersection of $[-1,1]^3$ with
a Pfaffian surface of complexity  (at most)
$(r, \alpha, \beta)$.
Then there exist $C_4({\mathcal X})$ and
$c_4(r, \alpha, \beta))$
such that, for $H\ge e$ and $t\in T$,
$$
N(X_t^{\rm trans}, H)\le C_4(\log H)^{c_4}.
$$
}

\medskip

The proofs of Theorems 2.3.1 and 2.3.2 and Propositions 2.3.6 and 2.3.7,
assuming the parameterization results,  are given in Section 3.

\section{Proofs of diophantine applications}

\subsection{Some preliminaries for 2.3.1 and 2.3.2}

For a positive integer $k$ and non-negative integer
$\delta$ we let
$$
\Lambda_k(\delta)=\{\mu=(\mu_1,\ldots,\mu_k)\in\NN^k:
|\mu|=\mu_1+\ldots+\mu_k=\delta\},
$$
$$
\Delta_k(\delta)=\{\mu=(\mu_1,\ldots,\mu_k)\in\NN^k:
|\mu|=\mu_1+\ldots+\mu_k\le\delta\},
$$
$$
L_k(\delta)=\hash\Lambda_k(\delta),\quad
D_k(\delta)=\hash\Delta_k(\delta).
$$
We recall that $\NN$ denotes the set of nonnegative integers.

Let $X=X_t$ be a fibre of our definable family.
We will adapt the methods of \cite{Pila:Mild}, in which
we explore $X(\QQ, H)$ with hypersurfaces of degree
$$
d=[(\log H)^{m/(n-m)}].
$$
This leads us to consider $D_n(d)\times D_n(d)$
determinants $\Delta$ whose entries are the monomials
of degree $d$ (indexed by $\Delta_n(d)$) evaluated
at $D_n(d)$ points of $X$. These points lie on
some algebraic hypersurface of degree $d$ if and
only if $\Delta=0$.

Given some suitable parameterization of $X$ by functions of
$m$ variables, we estimate the above determinant
by a Taylor expansion of the monomial functions
to a suitable order $b$ (remainder term order $b+1$).
The order of the Taylor expansion will match the size
of the matrix, and so we define $b(m,n,d)$ as the unique
integer $b$ with
$$
D_k(b)\le D_n(d) < D_k(b+1).
$$
It is an elementary computation, carried out in
\cite{Pila:Mild}, that
$$
b=b(m,n,d)=\left(\frac{m!}{n!}\right)^{1/m}d^{n/m}(1+o(1))
$$
where the $o(1)$ means, here and below, as
$d\rightarrow\infty$ with $m,n$ fixed. In particular,
$$
b(m,n,d)+1\le 2\left(\frac{m! d^n}{n!}\right)^{1/m}
$$
provided $d\ge d_0(m,n)$ and hence provided
$H\ge H_0(m,n)$.

The fact that $b$ is rather larger than $d$ is crucial
to the estimates.

\subsection{Proof of 2.3.1}

In this and subsequent subsections,
$C,c,\ldots$ will denote constants depending on
${\mathcal X}$, while $E$ denotes a constant depending
only on $m,n$, and in both cases they may differ
at each occurrence.

Let $X=X_t$ be a fibre of ${\mathcal X}$.
We assume for now that $H\ge H_0(m,n)$
for some $H_0(m,n)$ to be specified in the
course of the proof.

According to Theorem 2.1.3, we can parameterize
$X$ by functions
$$
\phi: (0,1)^m\rightarrow(0,1)^n
$$
such that
all partial derivatives of all component functions up to
degree $b+1$ are bounded in absolute value by 1,
and we can cover $X$ using at most
$$
Cb^{c}
$$
such functions, where $C, c$ depend on ${\mathcal X}$.

Let us fix one such function $\phi=(\phi_1,\ldots, \phi_n)$,
where $\phi_i: (0,1)^m\rightarrow (0,1)$.
From now on we deal only with $\phi$.
Our bounds will depend only on bounds for the derivatives
of the coordinate functions of $\phi$ up to order $b+1$.
Since $b$ depends on $n,m,d$, from now on,
all constants will depend only on $m, n, d$.

We consider a $D_n(d)\times D_n(d)$ determinant of the form
$$
\Delta=\det\left((x^{(\nu)})^\mu\right)
$$
with $\nu=1,\ldots, D_n(d)$ indexing rows
and $\mu\in\Delta_d(n)$ indexing columns, where
$x^{(\nu)}=(x^{(\nu)}_1,\ldots,x^{(\nu)}_n)\in X(\QQ, H)$
are points in the image of $\phi$,
say $x^{(\nu)}=\phi(z^{(\nu)})$
where $z^{(\nu)}\in (0,1)^m$,
later to be taken to be in a small disc in $(0,1)^m$,
and $x^\mu=\prod x_i^{\mu_i}$.

As each $x_j^{(\nu)}$ is a rational number with denominator
$\le H$, we find that there is a positive integer
$K$ such that $K\Delta\in \ZZ$ and
$$
K\le H^{ndD_n(d)}. \eqno{(*)}
$$

If we write
$$
\Phi_\mu =\prod_{i=1}^n \phi_i^{\mu_i}
$$
for the corresponding monomial function on the $\phi_i$
then we have
$$
\Delta=\det\left(\Phi_\mu(z^{(\nu)})\right).
$$
We now assume that the $z^{(\nu)}$ all lie in a small
disc of radius $r$ centred at some $z^{(0)}$
and expand the $\Phi_\mu$ in Taylor polynomials to order
$b$ (with remainder term of order $b+1$).
For $\alpha\in\Delta_k(b), \beta\in\Lambda_k(b+1)$ we write
$$
Q_{\nu, \mu}^\alpha=
\frac{\partial^\alpha\Phi_\mu(z^{(0)})}{\alpha!}
\left(z^{(\nu)}-z^{(0)}\right)^\alpha,\quad
Q_{\nu, \mu}^\beta=
\frac{\partial^\beta\Phi_\mu(z^{(0)})}{\beta!}
\left(\zeta^{(\nu)}_\mu-z^{(0)}\right)^\beta
$$
with a suitable intermediate point $\zeta^{(\nu)}_\mu$
(on the line joining $z^{(0)}$ to $z^{(\nu)}$),
and with $\alpha!=\prod_{j=1}^k\alpha_j!$,
so that the Taylor polynomial is
$$
\Phi_\mu(z^{(\nu)})=\sum_{\alpha\in \Delta_k(b)}
Q_{\nu, \mu}^\alpha
+\sum_{\beta\in\Lambda_m(b+1)}
Q_{\nu, \mu}^\beta
$$
and we have
$$
\Delta=\det\left(\sum_{\alpha\in\Delta_m(b+1)}
Q_{\nu, \mu}^\alpha\right).
$$

We expand the determinant as in \cite{P1}
(see also \cite{Scanlon1}, eqn (2), p48), using column linearity to get
$$
\Delta=\sum_{\tau}
\Delta_\tau,\quad
\Delta_\tau=\det\left(Q_{\nu, \mu}^{\tau(\mu)}\right).
$$
with the summation over
$\{{\tau: \Delta_n(d)\rightarrow
\Delta_m(b+1)}\}$.

Now if, for some $k\in\{1,\ldots, b\}$, we have
$$
\hash \tau^{-1}(\Lambda_m(k))> L_m(k) \eqno{(**)}
$$
then $\Delta_\tau=0$ as the corresponding
columns are dependent (the space of homogeneous forms
in $(z^{(\nu)}-z^{(0)})$ of degree $k$ has rank
$L_m(k)$). Thus all
surviving terms have a high number of factors
of the form $(z^{(\nu)}-z^{(0)})$ and/or
$(\zeta^{(\nu)}_\mu-z^{(0)})$. We quantify this.

The function
$\Phi_\mu$ is a product of $|\mu|\le d$ functions
$\phi_i$, which have suitably bounded derivatives.
Let us consider more generally a function
$$
\Theta=\prod_{i=1}^\delta\theta_i
$$
where $\theta_i$ have $|\theta_i^{(\alpha)}(z)|\le 1$
for all $|\alpha|\le b+1$. Then, for $\alpha$ with
$|\alpha|\le b+1$, we have
$$
\Theta^{(\alpha)}=
\sum_{\alpha^{(i)}}
{\rm Ch}(\alpha^{(1)},\ldots,\alpha^{(\delta)})
\prod_{i=1}^\delta \theta^{(\alpha^{(i)})}
$$
with the summation
over $\alpha^{(1)}+\ldots+\alpha^{(\delta)}=\alpha$
where
$$
{\rm Ch}(\alpha^{(1)},\ldots,\alpha^{(\delta)})=
\prod _{j=1}^m\left(\frac{\alpha_j!}{
\alpha^{(1)}_j!\ldots\alpha^{(\delta)}_j!}\right).
$$
Since $|\theta_i^{(\alpha)}(z)|\le 1$
for all $|\alpha|\le b+1$ we have
$$
|\Theta^{(\alpha)}(z)|\le
\sum_{\alpha^{(i)}_1}
\left(\frac{\alpha_1!}{
\alpha^{(1)}_1!\ldots\alpha^{(\delta)}_1!}\right)\times
\ldots\times\sum_{\alpha^{(i)}_m}
\left(\frac{\alpha_m!}{
\alpha^{(1)}_m!\ldots\alpha^{(\delta)}_m!}\right)
$$
each summation subject to $\sum_{i} \alpha^{(i)}_j=\alpha_j$
hence
$$
|\Theta^{(\alpha)}(z)|\le
\delta^{\alpha_1}\cdots\delta^{\alpha_m}=\delta^{|\alpha|}.
$$
Therefore
$$
|Q_{\nu, \mu}^\alpha|\le
\frac{(|\mu|r)^{|\alpha|}}{\alpha!}
\le e^{m|\mu|}r^{|\alpha|} \le e^{md}r^{|\alpha|},
$$
and for a $\tau$ which avoids the condition
$(**)$ above
(under which $\Delta_\tau=0$) all terms in its
expansion are bounded above in size by
$$
e^{mdD_n(d)}r^B
$$
where
$$
B=B(m,n,d)=\sum_{k=0}^bL_m(k)k+
\left(D_n(d)-\sum_{\kappa=0}^bL_m(\kappa)\right)(b+1).
$$
Note that
$\left(D_n(d)-\sum_{\kappa=0}^bL_m(\kappa)\right)\ge 0$
by our choice of $b$.
We have the asymptotic expression (see \cite{Pila:Mild})
$$
B=B(m,n,d)=\frac{1}{(m+1)!(m-1)!}\left(
\frac{m!}{n!}\right)^{(m+1)/m}d^{n+n/m} (1+o(1)).
$$

The number of terms from all the $\Delta_\tau$ is
$D_m(b+1)^{D_n(d)}D_n(d)!$
and we conclude that
$$
|\Delta|\le D_m(b+1)^{D_n(d)}D_n(d)!e^{mdD_n(d)}r^B.
$$
Thus we have an integer $K\Delta$ with
$$
K|\Delta|\le
H^{ndD_n(d)}D_m(b+1)^{D_n(d)}D_n(d)!e^{mdD_n(d)}r^B
$$
and if $K|\Delta|\ge 1$ then so is its $B$th root.
Now with our choice of $d$ we find that
$$
\frac{ndD_n(d)}{B}=E\frac{d^{n+1}}{d^{n+n/m}}(1+o(1))=
Ed^{-(n-m)/m}(1+o(1))
$$
(where $E$, according to the convention described above, is a constant depending
only on $n,m$, and possibly different in each occurence) thus
$$
H^{ndD_n(d)/B}\le E
$$
since $d\ge 0.5(\log H)^{(m/(n-m)}$, say, for $H$ sufficiently
large in terms of $n,m$.

The remaining terms
$$
\left(D_m(b+1)^{D_n(d)}D_n(d)!e^{mdD_n(d)}\right)^{1/B}
$$
are also bounded as $d\rightarrow\infty$
(see \cite{Pila:Mild}) and so
$$
\left(K|\Delta|\right)^{1/B}\le Er
$$
where $E$ is a constant depending only on
$n,m$, provided $H\ge H_0(n,m)$ is sufficiently
large. If $rE<1$ then all
points of $X(\QQ, H)$ parameterized by $\phi$ from this disk
lie on one algebraic hypersurface of degree $d$, because
the rank of the rectangular matrix formed by
evaluating all monomials of degree $\le d$ at all such
points is less than $D_n(d)$.

Since $(0,1)^m$ may be covered by some $E'$ such discs,
and there are $C(b+1)^c\le C'(\log H)^{c'}$ maps
$\phi$ which cover $X$, the required
conclusion follows for $H\ge H_0(m,n)$. However for
$H\le H_0$ the number of points is bounded
depending only on $H_0, m, n$.\ \qed

\medskip

\noindent
{\bf Remark.\/} Note that the statements in
\cite[3.2 and 3.3]{Pila:Mild} tacitly assume that
the mildness parameter $A$ satisfies $A\ge 1$,
and that with no conditions on $A$ they can fail
for small height.

\subsection{Setup for 2.3.2}

This and the subsequent two subsections are devoted to
the proof of Theorem 2.3.2.
After some preliminary results, the
proof itself is in 3.5.

We have a family of sets $X_t\subset [-1,1]^n$,
of dimension $m$, definable in $\RR_{an}$. By 2.2.3 we may assume
this family is contained in a family given by a quasi-parameterization,
which we may take to be of the following form.
There exist a positive integer $N$ (which is the degree $d$ of the polynomial
dependence of the functions $F_t$ in their first variable in 2.2.3)
and analytic functions
$$
h_{i,j}: [-1,1]^{m+\eta}\rightarrow \RR, \quad
i=1,\ldots, n,\quad  j=0,\ldots, N-1,
$$
converging on a disc of radius $r_0>1$ in the first $m$ variables.
The remaining variables for $[-1, 1]^\eta$ are for the parameters
of the quasi-parameterization. We have functions
$$
u_j: T\rightarrow [-1,1]^\eta
$$
(which need not be definable)
such that, setting $u(t)=(u_1(t),\ldots, u_\eta(t))$,
for all $t\in T$ and $x=(x_1,\ldots, x_n)\in X_t$
there exists $w=(w_1,\ldots, w_m)\in [-1,1]^m$ such that
$$
x_i^N=h_{i,N-1}(w_i, u(t))x_i^{N-1}+ \ldots +
h_{i, 0}(w_i, u(t)).
$$
The functions $h_{ij}$ record the polynomial dependence of the functions
in 2.2.3 on their first variable; according to 2.2.3 we can
in fact assume that the $h_{i,j}$ are independent of $i$,
but we don't need this.

We keep the previous convention regarding constants.

\subsection{Preliminary estimates}

For each $i$, setting $x=x_i$ and $h_j=h_{i,j}$ and
suppressing the subscript $i$ and the arguments
of the $h_j$, we have a relation
$$
x^N=h_{N-1}x^{N-1}+ \ldots + h_{0}.
$$
By means of this relation, all powers $x^\nu, \nu\in\NN$
may be expressed
as suitable linear combinations of $1,x,\ldots, x^{N-1}$, namely
$$
x^\nu= \sum_{j=0}^{N-1} q_{\nu, j}x^j
$$
with coefficients $q_{\nu, j}\in \ZZ[h_0,\ldots, h_{N-1}]$.
In particular $q_{N,j}=h_j$. We need an estimate
for the degree and integer coefficients of the $q_{\nu, j}$.

Let $H$ be the $N\times N$ matrix of analytic functions
%$$
%P=\pmatrix{0 & 1 & 0 &0 & \ldots & 0\cr
%0&0& 1& 0& \ldots & 0\cr
%\vdots& \vdots & \vdots &\vdots& \ddots & \vdots\cr
%0&0&0&0&\dots&1\cr
%p_0& p_{1} & p_{2} & p_{3} &\dots & p_{N-1}\cr
%}
%$$

$$
\left(
\begin{array}{cccccc}
0&0&0&\ldots&0&h_0\\
1&0&0&\ldots&0&h_1\\
0&1&0&\ldots&0&h_2\\
\vdots&\vdots&\vdots&\ddots&\vdots&\vdots\\
0&0&0&\ldots&1&h_{N-1}
\end{array}
\right)
$$

Then $H$ acts as a linear transformation on the vector space
with (ordered) basis $\{1, x, \ldots, x^{N-1}\}$ and the $q_{\nu, j}$
are the entries of the column vector
%$$
%P^\nu\pmatrix{1\cr 0\cr \vdots \cr 0}.
%$$
$$
H^\nu
\left(
\begin{array}{c}1\\0\\\vdots\\0
\end{array}
\right)
$$
Inductively, the entry $H^\nu_{\ell j}$ is in
$\ZZ[h_0,\ldots, h_{N-1}]$ of
degree $\max(0, j-N+\nu)$ and the sum of at most
$\max(2^{\nu-1+j-N},1)$ pure (i.e. with coefficient 1)
monomials. We have proved the following.

\medskip
\noindent
{\bf Lemma 3.4.1 \/} {\it For all $\nu\in\NN$ and $j=0,\ldots, N-1$,
$q_{\nu, j}$ is a sum of at most $2^\nu$ monomials of
degree at most $\nu$ in the $h_j$.\ \qed\/}

\medskip

The above is for one variable. We now return to the multivariate
setting with $x=(x_1,\ldots, x_n)$.
For $\nu\in \NN$ we then have
$$
x_i^\nu=\sum_{j=0}^{N-1}q_{i, \nu, j} x^j
$$
where $q_{i, \nu, j}$ is the previously labelled $q_{\nu, j}$ for the
relevant $h_j=h_{i,j}$.

If $\nu\in\NN^n$ we have
$$
x^\nu=\prod_{i=1}^nx_i^{\nu_i}=
\prod_{i=1}^n\sum_{j=0}^{N-1}q_{i, \nu_i ,j}x^j=
\sum_{\lambda\in M}q_{\nu,\lambda}x^\lambda
$$
where $q_{\nu,\lambda}=\prod_{i=1}^nq_{i, \nu_i, \lambda_i}$.

We now want to bound the derivatives (in the $w$ variables)
of the $q_{\mu, \lambda}$. For
$\alpha=(\alpha_1,\ldots, \alpha_m)\in \NN^m$ we set
$\overline{\alpha}=\max(\alpha_i)$.

\medskip
\noindent
{\bf Lemma 3.4.2 \/} {\it
For suitable constants $C, R$ and all $\alpha\in \NN^m$ we have
$$
\frac{|q_{\nu, \lambda}^{(\alpha)}(w, u)|}{\alpha ! }
\le 2^{|\nu|} (\overline{\alpha}+1)^{(|\nu|-1)m}
C^{|\nu|}  R^{|\alpha|}.
$$

}
\medskip
\noindent
{\bf Proof.\/}
For derivatives (in the $w$-variables) of the $h_{i,j}$ we
have a bound of the form
$$
\frac{|h_{i,j}^{(\alpha)}(w, u)|}{\alpha!} \le  \,
C\,R^{|\alpha|},
$$
where $\alpha\in \NN^m$, valid for every $i,j$,
by Cauchy's theorem, since they are
analytic on some disc of radius $r_0>1$.

We have that $q_{i,\nu_i, j}$ is a sum of at most $2^{\nu_i}$
monomials in the $h_{i,j}$, each of degree at most $\nu_i$.
Therefore $q_{\nu, \lambda}$ is a sum of at most
$2^{|\nu|}$ monomials each of degree $|\nu|$ in the $h_{i,j}$.
Consider one such monomial
$$
g=\prod_{h=1}^\ell \phi_h
$$
where each $\phi_h\in\{h_{i,j},: i=1,\ldots, n, j=0,\ldots, N-1\}$
and $\ell\le |\nu|$. As before
$$
\partial^\alpha g =\partial^\alpha \phi_1\ldots\phi_\ell=
\sum_{\alpha^{(1)}+\ldots + \alpha^{(\ell)}=\alpha}
{\rm Ch\/}(\alpha^{(1)},\ldots,\alpha^{(\ell)})
\prod_{h=1}^\ell \partial^{\alpha^{(h)}}\phi_h.
$$
Thus
$$
\frac{\partial^\alpha g}{\alpha!}\le
\sum_{\alpha^{(1)}+\ldots + \alpha^{(\ell)}=\alpha}
\prod_{h=1}^\ell \frac{\partial^{\alpha^{(h)}}\phi_i} {\alpha^{(h)}!}
\le C^{|\nu|} R^{|\alpha|}
\sum_{\alpha^{(1)}+\ldots + \alpha^{(\ell)}=\alpha}1.
$$
The number of summands is at most
$(\overline{\alpha}+1)^{(|\nu|-1)m}$.\ \qed

\medskip
\noindent
{\bf Corollary 3.4.3 \/} {\it For $\alpha\in \NN^m$ we have
$$
\frac{|q_{\nu, \lambda}^{(\alpha)}(w, u)|}{\alpha! }
\le 2^{|\nu|} (|\alpha|+1)^{|\nu|m}
C^{|\nu|}  R^{|\alpha|}.\ \mqed
$$
}

\subsection{Proof of Theorem 2.3.2}

We let $X=X_t$ be a fibre of ${\mathcal X}$.
We will explore $X(\QQ, H)$ with
real algebraic hypersurfaces of degree
$$
d=[(\log H)^{m/(n-m)}],
$$
and again consider $D_n(d)\times D_n(d)$ determinats
$$
\Delta=\det \left((x^{(\nu)})^\mu\right)=0
$$
where $\mu\in \Delta_n(d)$ indexes monomials and
$x^{(\nu)}, \nu=1,\ldots, D_n(d)$ are points of $X$.

For each $x^{(\nu)}$ there is some $w^{(\nu)}$ such that the
quasi-parmeterization conditions hold, i.e.
``$x^{(\nu)}$ is parameterized by the point $w^{(\nu)}$''.
Later we will assume that all the $w^{(\nu)}$ are in
the disc of radius $r$ entered at some $w^{(0)}$.

By our assumptions (see $(*)$ above), there is a positive integer
$K\le H^{ndD_n(d)}$
such that
$$
K\Delta\in \ZZ.
$$

Now let $M=\{\lambda\in\NN^n: \lambda_i<N, i=1,\ldots, n\}$.
Then if $x\in X$ we have
$$
x^\mu =\sum_{\lambda\in M} x^\lambda q_{\mu,\lambda}(w, u(t))
$$
for some $w\in [-1,1]^m$ where
$$
q_{\mu, \lambda}=\prod_{i=1}^n q_{\mu_i, \lambda_i}.
$$

There is a unique $b=b(m,n,d, N)$ such that
$$
N^nD_m(b)\le D_n(d)\le N^nD_m(b+1).
$$
Since $m<n$ there are fewer monomials in $m$ variables
than in $n$ variables, and so if $d$ suitably large in terms of $N, n$
then $b$ is  somewhat larger than $d$.
Set
$$
B(m,n,d, N)=\sum_{\beta=0}^b N^nL_m(\beta)\beta +
\Big(D_n(d)-{N^n}\,\sum_{\beta=0}^b L_m(\beta)\Big)(b+1).
$$
Since $b$ is somewhat larger than $d$, we will have
that $B$ is
somewhat larger than $ndD_n(d)$ (as $d\rightarrow\infty$),
as will be crucial.

Now we assume that all the $w^{(\nu)}$ are all in
the disc of radius $r$ entered at some $w^{(0)}$.
We expand each $q=q_{\mu, \lambda}$ in a Taylor polynomial
with remainder term of order $b+1$.
For $\alpha\in \Delta_k(b), \beta\in \Lambda_k(b+1)$
we write
$$
Q_{\nu, \mu}^{\lambda, \alpha}=
\frac{\partial^\alpha q_{\mu, \lambda}}{\alpha!}
\left(w^{(0)}\right)
\left(w^{(\nu)}-w^{(0)}\right)^\alpha, \quad
Q_{\nu, \mu}^{\lambda, \beta}=
\frac{\partial^\beta q_{\mu, \lambda}}{\beta!}
\left(w^{(0)}\right)
\left(\zeta^{(\nu)}_{\lambda, \beta}-w^{(0)}\right)^\beta
$$
for some suitable intermediate point
$\zeta^{(\nu)}_{\lambda, \beta}$.
Then we have
$$
\Delta=\det\Big(\sum_{(\lambda,\alpha)}Q_{\mu, \nu}^{\lambda,\alpha}\Big),
$$
with the summation over
$(\lambda,\alpha)\in \{0,\ldots, N-1\}^n\times
\Delta_k(b+1)$.

We expand the determinant as previously
in terms of maps
$$
\tau: \Delta_n(d)\rightarrow
\{0,\ldots, N-1\}^n\times \Delta_k(b+1)
$$
giving
$$
\Delta=\sum_{\tau} \Delta_\tau,\quad \Delta_\tau=
\det\Big(Q^{\tau(\mu)}_{\mu, \nu}\Big).
$$
with the summation over $\tau$ as above.

Now if for some $\lambda\in M, k$ with $0\le k \le b$
we have
$$
\hash \tau^{-1}\big(\{\lambda\}\times\Lambda_m(k)\big)
> L_m(k)
$$
then $\Delta_\tau=0$ because
the corresponding columns
are dependent (the factors $(x^{(\nu)})^\lambda$
are constant on the rows in those columns).

Since there are $N^n$ possibilities for $\lambda$,
we have that
the total number of columns from which an expansion term
of degree $k$ may
be drawn for a surviving term is $N^nL_m(k)$.

{\bf We now assume that $rR<1$.\/}
Then every surviving term is estimated by
$$
\big[\big(n(b+1)^mC\big)^d\big]^{D_n(d)}\, (rR)^{B'}
$$
for some $B'\ge B=B(m,n,d, N)$, and since $rR<1$ every term
is estimated by the above with $B'=B$.

The total number of terms, assuming no cancellation, is
$$
D_n(d)!\left(N^n\,D_m(b+1)\right)^{D_n(d)}.
$$
Thus we have an integer $K\Delta$ with
$$
K|\Delta| \le H^{ndD_n(d)}\, D_n(d)!\left(N^nD_m(b+1)\right)^{D_n(d)}\,
\big[\big(n(b+1)^mC\big)^d\,\big]^D\, (rR)^{B}.
$$
And if $K|\Delta|\ge 1$ then so is its $B$th root.

Now we have (see \cite{Pila:Mild})
$$
L_m(d)=\left(\begin{array}{c}
m-1+d\\ m-1
\end{array}\right)
=
\frac{d^{m-1}}{(m-1)!}(1+o(1))
$$
where here and below $o(1)$ means as $d\rightarrow\infty$
for fixed $m, n, N$. Thus likewise
$$
D_m(d)=L_{m+1}(d)=\frac{d^{m}}{m!}(1+o(1)).
$$
We find that
$$
b(n,m,N,d)= \left(\frac{m!d^n}{n! N^n}\right)^{1/m}(1+o(1))
$$
for $d\rightarrow\infty$ with $n,m,N$ fixed. Thus
(by replacing the sum $\sum_{\delta=0}^bL_m(\delta)\delta$
by an integral)
$$
B(m,n,N,d)=E(m,n,N)d^{n(m+1)/m}(1+o(1))
$$
where $E$ is a suitable combinatorial expression.

With our choice of $d$ we have as before that
$H^{ndD_n(d)/B}\le E$ is bounded depending only on $n,m$
(once $H$ is sufficiently large in terms of $n,m$).
We also have
$$
\Big(D_n(d)!(N^nD_m(b+1))^{D_n(d)}\Big)^{1/B}=1+o(1)
$$
as $d\rightarrow\infty$, so is bounded by some $E$.

Finally, we have that
$$
\frac{dD}{B}=
\frac{E}{d^{n/m-1}}(1+o(1))\le
\frac{E}{d^{n/m-1}},
$$
(with a different $E$) while similarly
$$
\frac{bD}{B}=E(1+o(1))\le E.
$$

Therefore
$$
|K\Delta|^{1/B}\le E\,C\,Rr
$$
and all the points of $X(\QQ, H)$ in the image of the
disc lie on one hypersurface
of degree at most $d$ provided
$$
r<(C\, E\, R)^{-1}.
$$
The box $[-1,1]^m$ may be covered by
$$
C'=\left(c(n)CER+1\right)^m
$$
such discs, where $c(n)$ is the maximum side of a
cube inscribed in a unit $n$-sphere.
This gives the desired conclusion for $H\ge H_0(m,n,N)$
and for smaller $H$ it follows as the number of such
rational points is bounded. \qed

\subsection{Proof of 2.3.6 and 2.3.7}
In the case of 2.3.6, by Theorem 2.3.1, $X_t(\QQ, H)$ is contained
in the intersection of $X_t$ with at most
$C_1({\mathcal X})(\log H)^{c_1({\mathcal X})}$
hypersurfaces of degree $d=[(\log H)^{m/(n-m)}]$, while
in the case of 2.3.7, by Theorem 2.3.2, $X_t(\QQ, H)$ is contained
in the intersection of $X_t$ with at most $C_2({\mathcal X})$
hypersurfaces of degree $d=[(\log H)^{m/(n-m)}]$.

If any such intersection has dimension 2, then the Pfaffian
functions parameterizing the surface $X_t$ identically
satisfy some algebraic relation. Then the surface $X_t$
is algebraic, and $X_t^{\rm trans}$ is empty.

Thus we may assume that all the intersections have dimension
at most 1. We will treat these following the method in \cite{Pila:Mild}
by dividing the intersections into graphs of functions with suitable
properties, and estimating the rational points on any such graphs
which are not semi-algebraic using the Gabrielov-Vorobjov estimates.

Suppose that the fibre $X_t$ is the intersection of $[-1,1]^3$
with the Pfaffian surface defined by the Pfaffian functions
$$
x, y, z: G\rightarrow \RR
$$
of complexity (at most) $(r, \alpha, \beta)$.
Write $(p,q)$ for the variables in $G$. Suppose that the polynomial
$F(x,y,z)$ of degree at most $d$ defines the hypersurface $V=V_F$.

The intersection $X_t\cap V$ is the image of the one-dimensional
subset $W\subset G$ defined by
$$
\phi(p,q)=F(x(p,q), y(p,q), z(p,q))=0.
$$
It is thus the zero-set of a Pfaffian function of complexity $(r, \alpha, d\beta)$.
The singular set $W_s\subset W$ is defined by $\phi=\phi_p=\phi_q=0$,
the zero-set  Pfaffian functions of complexity $(r, \alpha, \alpha+d\beta-1)$
(see \cite[2.5]{GV}).

At a point of $W-W_s$, $W$ is locally the graph of a real-analytic function
parameterized by $p$ if $\phi_p\ne 0$, or $q$ if $\phi_q\ne 0$.

Proceeding as in \cite{Pila:Mild}, we decompose $V_F$ into ``good'' curves, and
points. Here a ``good'' curve is a connected subset whose projection
into each coordinate plane of $\RR^3$ is a ``good'' graph with respect
to one or other of the axes; namely, the graph of a function $\psi$ which
is real analytic on an interval, has slope of absolute value at most 1 at every point, and such
that the derivative of $\psi$ of each order $1,\ldots, [\log H]$ is either
non-vanishing in the interior of the interval or identically zero.

Using the topological estimates of Gabrielov-Vorobjov \cite[3.3]{GV}, Zell \cite{Zell},
and estimates for the complexities of the various Pfaffian functions involved
as in \cite{Pila:Mild}, one shows that $V_F$ decomposes into a union of at most
$$
C_5(r, \alpha, \beta) d^{C_6(r, \alpha, \beta)}
$$
points and ``good'' curves $Y$. If such a ``good'' curve  is semi-algebraic, then
so are its projections to each coordinate plane, and also conversely.
On a non-algebraic plane ``good'' graph $Y$, one has
$$
N(Y, H)\le C_7(r,\alpha, \beta) (d\log H)^{C_8(r, \alpha, \beta)}
$$
as in \cite{Pila:Mild}, using \cite{P2} and estimates
for Pfaffian complexity. Combining the last two estimates with
those in the first paragraph of the proof gives the
required conclusions.\ \qed

\section{Proof of the $C^r$-parameterization theorem} \label{sec:Proof:C^r}

In this section we prove Theorem 2.1.3, in a self-contained way except for Miller's preparation result \cite[Main Theorem]{MillerD}.
We also show a so-called pre-parameterization result for definable sets in $\Ranp$, used to generate $C^r$-parameterizations with the required number of maps essentially by composing with power maps (see Theorem \ref{pre-param} and how it is used to prove Theorem 2.1.3).
In Theorem \ref{pre-param}, there is at first sight no family version or parameter dependence, but, at second sight one sees that it is implicitly built in via a triangular property of the maps involved.

In Section \ref{sec:comp} we give some results about derivatives of compositions, related to mild functions and Gevrey functions, and we introduce the notion of weakly mild functions (see Definition \ref{def:weaklymild}). In Section \ref{sec:a-b-m} we define a-b-m functions, and relate them with weakly mild functions. In Section \ref{sec:first:param} we state the pre-parameterization result and use it to prove Theorem 2.1.3.  In Section \ref{sec:preparation:Miller} we prove our pre-parameterization result using a preparation result from \cite{MillerD}.
%Let us repeat that the essential missing ingredient to solve the question raized in \cite[below Remark 3.8]{Burguet-Liao-Yang} is an effective form of the preparation result of \cite{MillerD} for the semi-algebraic case, and that our Theorem 2.1.3 is thus only a partial answer to this question.

\subsection{Compositions}\label{sec:comp}

We first equip the notion of mild functions from \cite{Pila:Mild} with an order. Next, we introduce the related notion of weakly mild functions (see Definition \ref{def:weaklymild}). These notions are variants of the notion of Gevrey functions \cite{Gevrey}.

\begin{defn}\label{def:mild}
Let $A>0$ and $C\geq 0$ be real, and let $r>0$ be either an integer or $+\infty$.
A function $f:U\subset (0,1)^m\to [-1,1]$ with $U$ open is called $(A,C)$-mild up to order $r$ if it is $C^r$ and for all $\alpha\in\NN^m$ with $|\alpha|\leq r$ and all $x\in U$ one has
$$
| f^{(\alpha)}(x) |  \leq \alpha! (A|\alpha|^C)^{|\alpha|},
$$
where $\alpha!=\prod_{i=1}^{m} \alpha_i!$ and $|\alpha|=\sum_{i=1}^{m} \alpha_i$ as usual.
Call a map $f:U\subset (0,1)^m\to [-1,1]^n$ $(A,C)$-mild up to order $r$ if all of its component functions are.
\end{defn}

\begin{defn}[Weakly mild functions]\label{def:weaklymild}
Let $A>0$ and $C\geq 0$ be real, and $r>0$ be either an integer or $+\infty$.
A function $f:U\subset (0,1)^m\to [-1,1]$ with $U$ open is called weakly $(A,C)$-mild up to order $r$ if it is $C^r$ and for all $\alpha\in\NN^m$ with $|\alpha|\leq r$ and all $x\in U$ one has
$$
| f^{(\alpha)}(x) |  \leq \frac{ \alpha! (A|\alpha|^C)^{|\alpha|} } {x^\alpha} ,
$$
where $x^\alpha$ stands for $\prod_{j=1}^m x_j^{\alpha_j}$.
Call a map $f:U\subset (0,1)^m\to [-1,1]^n$ weakly $(A,C)$-mild up to order $r$ if all of its component functions are.
\end{defn}

We say mild (resp.~weakly mild) for $(A,C)$-mild (resp.~weakly $(A,C)$-mild) up to order $+\infty$ for some $A>0$ and some $C\geq 0$.
By the theory of Gevrey functions \cite{Gevrey}, it is known that a composition of mild functions is mild. Here we study some related results about compositions, with proofs based on Fa\`a di Bruno's formula.  (We do no effort to control the bounds beyond what we need.)
The next lemma is obvious by the chain rule for derivation.
\begin{lem}\label{rem:mild}
Let $r>0$ be an integer and let
$$
f:U\subset (0,1)^m\to [-1,1]
$$
be $(A,C)$-mild up to order $r$.
Then, for any $\xi\in (0,1)^m$, the function $$
V\to [-1,1]:x\mapsto f(\xi + x/Ar^{C+1})
$$
has $C^r$-norm bounded by $1$,
where $V\subset (0,1)^m$ is the open set consisting of $x$ such that $\xi + x/Ar^{C+1} := (\xi_1+x_1/Ar^{C+1},\ldots,\xi_m+ x_m/Ar^{C+1})$ lies in $U$.
\end{lem}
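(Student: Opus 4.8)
The plan is to apply the chain rule and then combine the defining inequality of $(A,C)$-mildness with two elementary estimates. Write $\lambda := Ar^{C+1}$ and $g(x) := f(x/\lambda)$ for $x \in V$; since $x \mapsto x/\lambda$ is affine (hence $C^\infty$) and maps $V$ into $U$ by definition of $V$, the function $g$ is $C^r$ on $V$. For any $\alpha \in \NN^m$ with $1 \le |\alpha| \le r$ the chain rule gives $g^{(\alpha)}(x) = \lambda^{-|\alpha|} f^{(\alpha)}(x/\lambda)$, whence, by mildness of $f$,
\[
|g^{(\alpha)}(x)| \le \lambda^{-|\alpha|}\,\alpha!\,(A|\alpha|^{C})^{|\alpha|}.
\]

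Next I would invoke the elementary bound $\alpha! \le |\alpha|! \le |\alpha|^{|\alpha|}$ (the first inequality because the multinomial coefficient $|\alpha|!/\alpha!$ is a positive integer, the second a standard estimate). Substituting this and recalling $\lambda = Ar^{C+1}$ yields
\[
|g^{(\alpha)}(x)| \le \frac{(A|\alpha|^{C+1})^{|\alpha|}}{(Ar^{C+1})^{|\alpha|}} = \left(\frac{|\alpha|}{r}\right)^{(C+1)|\alpha|} \le 1,
\]
the final step because $|\alpha| \le r$ forces $|\alpha|/r \le 1$. The case $\alpha = 0$ is immediate since $f$ takes values in $(0,1)$, so $|g(x)| = |f(x/\lambda)| < 1$. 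Hence every partial derivative of $g$ of order at most $r$ is bounded by $1$ in absolute value, i.e.\ $\lVert g \rVert_{C^r} \le 1$.

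There is no real obstacle here: the statement is, as remarked in the text, essentially a direct consequence of the chain rule, and the only points requiring a word of justification are the combinatorial inequality $\alpha! \le |\alpha|^{|\alpha|}$ and the separate (trivial) treatment of the zeroth-order term.
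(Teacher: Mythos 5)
Your argument is correct and is exactly the route the paper intends: the paper dismisses this lemma as ``obvious by the chain rule,'' and your write-up simply supplies the details (chain rule, the bound $\alpha!\le|\alpha|!\le|\alpha|^{|\alpha|}$, and the observation that $(|\alpha|/r)^{(C+1)|\alpha|}\le 1$ for $|\alpha|\le r$). Nothing further is needed.
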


%We give a basic corollary of a multi-variable version of Fa\`a di Bruno's formula from e.g.~\cite{ConstantineSavits}. Recall that Fa\`a di Bruno's formula describes the iterated derivatives of a composition of functions as an explicit sum of terms which come up by iterated use of the chain rule.

Theorem 2.1 of \cite{ConstantineSavits} is a multi-variate form of Fa\`a di Bruno's formula for iterated derivatives of compositions, which we now recall.

\begin{prop}[\cite{ConstantineSavits}, Theorem 2.1]\label{prop:multi-bruno}
Let $m\geq 1$ and $d\geq 1$ be integers. %Then there exist $A>0$ and $C\geq 0$, depending only on $m$ and $d$, such that the following holds.
Consider a composition $h=f\circ g$, with $g:U\subset \RR^d\to V\subset \RR^m$, $f:V\to\RR$ and $U$ and $V$ open.
Let $\nu\in\NN^d$ be a nonzero multi-index. Write $|\nu| =  n$, and suppose that $f$ and $g$ are $C^n$.
Then
$$
h^{(\nu)}
$$
is equal to the sum over $(\lambda,s,k,\ell)\in I$ of
the terms
\begin{equation}\label{eq:brunoterm:hnu}
a_{\nu,s,k, \ell} f^{(\lambda)} \prod_{j=1}^s \big( g^{(\ell_j)} \big)^{k_j}
\end{equation}
with
\begin{equation}\label{eq:brunoterm:a}
a_{\nu,s,k, \ell} = \frac{\nu!}{\prod_{j=1}^s (k_j!)(\ell_j!)^{|k_j|}},
\end{equation}
where $0^0=1$, $h^{(\nu)}$ and $g^{(\ell_j)}$ are evaluated at $u\in U$ and $f^{(\lambda)}$ at $g(u)$, and where $I$ consists of $(\lambda,s,k,\ell)$ with
$\lambda\in\NN^m$, $1\leq s \leq n$, $k_j\in\NN^m$, $\ell_j\in\NN^d$, $j=1,\ldots,s$, $0< |\lambda|\leq n$, $0<|k_j|\leq n$, $0<|\ell_j|\leq n$, $\ell_j \prec \ell_{j+1}$,
\begin{equation}\label{eq:brunocond}
\sum_{j=1}^s k_j = \lambda,\  \mbox{ and }  \sum_{j=1}^s |k_j|\ell_j = \nu.
\end{equation}
Here, $\ell_j \prec \ell_{j+1}$ for $j<s$ means that either $|\ell_j| < |\ell_{j+1}|$, or, $|\ell_j| = |\ell_{j+1}|$ and $\ell_j$ comes lexicographically before $\ell_{j+1}$. (For $j=s$, $\ell_j \prec \ell_{j+1}$ is no condition.)
Moreover, there exist $A>0$ and $C\geq 0$, depending only on $m$ and $d$, such that
\begin{equation}\label{eq:brunoterm:sum}
\sum_{(\lambda,s,k,\ell) \in I} a_{\nu,s,k, \ell}   \leq (An^{C})^{n}.
\end{equation}
\end{prop}
%\begin{lem}\label{brunoroughmd}
%With notation from Proposition \ref{prop:multi-bruno}, there exist $A>0$ and $C\geq 0$, depending only on $m$ and $d$, such that
%\begin{equation}\label{eq:brunoterm:sum}
%\sum_{(\lambda,s,k,\ell) \in I} a_{\nu,s,k, \ell}   \leq (An^{C})^{n}.
%\end{equation}
%\end{lem}
\begin{proof}
We only have to prove (\ref{eq:brunoterm:sum}), since the other part is literally Theorem 2.1 of \cite{ConstantineSavits}.
But (\ref{eq:brunoterm:sum}) follows from
$$
a_{\nu,s,k, \ell} = \frac{\nu!}{\prod_{j=1}^s k_j! (\ell_j!)^{|k_j|}}\leq \nu! \leq n^n
$$
and
$$
\# I \leq  (n+1)^{1 + m + mn + nd },
$$
where the latter bound is obtained by letting the $s$, $\lambda_i$, $k_{ji}$ and $\ell_{jr}$ for $i=1,\ldots,m$,  $j=1,\ldots,n$, and $r=1,\ldots,d$ run independently from $0$ to $n$ when estimating the number of elements of $I$.
\end{proof}

The main purpose of our notion of weakly mild functions is that composition with $r$th power maps makes them mild up to order $r$, as follows.

\begin{prop}[Composition with power maps]\label{ell}
Let $A>0$ and $C\geq 0$ be real numbers and let $m>0$ be an integer.
Let $f:V\to [-1,1]$ be a function on some open $V\subset (0,1)^m$.
Assume for each $\beta\in \NN^m$ with $|\beta|\leq 1$ that $f^{(\beta)}$ is weakly $(A,C)$-mild up to order $+\infty$.
Then, there is $(A',C')$, depending only on $m$, $A$ and $C$, such that for any integers $r>0$ and $L_i\geq r$,
the composition $h=f\circ g$ of $f$ with
$$
g :x\mapsto x^L := (x_1^{L_1},\ldots,x_m^{L_m})
$$
on the open $U\subset (0,1)^m$ consisting of $x$ with $x^L\in V$,
is $(MA',C')$-mild up to order $r$, with $M=\max_i L_i$.
\end{prop}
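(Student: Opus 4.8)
To prove this proposition I would apply the Fa\`a di Bruno estimate of Lemma~\ref{brunoroughmd} to $h=f\circ g$ and bound each resulting term, exploiting both the explicit shape $g(x)=x^L$ of the inner map and the weak mildness of $f$ \emph{together with its first-order partials}. The one non-routine point is a choice, depending on the evaluation point $x$, of which first derivative of $f$ to peel off.

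Fix $\nu\in\NN^m$ with $1\le|\nu|=n\le r$ and a point $x\in U$ (the case $\nu=0$ being trivial since $f$ takes values in $(0,1)$). By Lemma~\ref{brunoroughmd} applied with $d=m$, $h^{(\nu)}(x)$ is a sum of at most $(A_0n^{C_0})^n$ terms of the form $f^{(\lambda)}(g(x))\prod_{j=1}^s\big(g^{(\ell_j)}(x)\big)^{k_j}$ with $\sum_jk_j=\lambda$, $\sum_j|k_j|\ell_j=\nu$, $1\le s\le n$ and $0<|\lambda|,|k_j|,|\ell_j|\le n$, where $A_0,C_0$ depend only on $m$. Because each $g_i(x)=x_i^{L_i}$ is a function of $x_i$ alone, a nonzero term forces each $\ell_j$ and each $k_j$ to be supported on one coordinate $i(j)$: write $\ell_j=\ell_j'e_{i(j)}$, $k_j=k_j'e_{i(j)}$ with $\ell_j'=|\ell_j|\le n\le r\le L_{i(j)}$. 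Since $g_{i(j)}^{(\ell_j)}(x)=\frac{L_{i(j)}!}{(L_{i(j)}-\ell_j')!}\,x_{i(j)}^{L_{i(j)}-\ell_j'}$, using $L!/(L-\ell)!\le L^{\ell}$ together with $\sum_{j:i(j)=i}k_j'=\lambda_i$ and $\sum_{j:i(j)=i}k_j'\ell_j'=\nu_i$ one gets
$$\Big|\prod_{j=1}^s\big(g^{(\ell_j)}(x)\big)^{k_j}\Big|\ \le\ \prod_i L_i^{\nu_i}\cdot\prod_i x_i^{L_i\lambda_i-\nu_i}\ \le\ L_{i_0}^{\,n}\prod_i x_i^{L_i\lambda_i-\nu_i}.$$
Note in passing that $\lambda_i=0$ forces $\nu_i=0$, so $\operatorname{supp}\nu\subseteq\operatorname{supp}\lambda$ and $\operatorname{supp}\lambda\neq\emptyset$.

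Now choose $i_1\in\operatorname{supp}\lambda$ with $x_{i_1}=\min\{x_i:i\in\operatorname{supp}\lambda\}$ and write $\lambda=e_{i_1}+\alpha$, $\alpha=\lambda-e_{i_1}\in\NN^m$, $|\alpha|=|\lambda|-1<n$. Weak $(A,C)$-mildness of $f^{(e_{i_1})}$ (to order $+\infty$), evaluated at $g(x)=x^L$ where $(x^L)^\alpha=\prod_i x_i^{L_i\alpha_i}$, gives
$$\big|f^{(\lambda)}(g(x))\big|=\big|(f^{(e_{i_1})})^{(\alpha)}(x^L)\big|\ \le\ \frac{\alpha!\,(A|\alpha|^C)^{|\alpha|}}{\prod_i x_i^{L_i\alpha_i}}.$$
Multiplying the two displayed bounds, the exponent of $x_i$ becomes $L_i\lambda_i-\nu_i-L_i\alpha_i=L_i(e_{i_1})_i-\nu_i$, so the $x$-dependent factor is $x_{i_1}^{L_{i_1}-\nu_{i_1}}\prod_{i\ne i_1}x_i^{-\nu_i}$. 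Since $\operatorname{supp}\nu\subseteq\operatorname{supp}\lambda$ and $x_i\ge x_{i_1}$ for all $i\in\operatorname{supp}\lambda$, we have $\prod_{i\ne i_1}x_i^{-\nu_i}\le x_{i_1}^{\nu_{i_1}-n}$, so this factor is at most $x_{i_1}^{L_{i_1}-n}\le 1$ because $L_{i_1}\ge r\ge n$. Hence every term is $\le\alpha!\,(A|\alpha|^C)^{|\alpha|}L_{i_0}^n$; summing the at most $(A_0n^{C_0})^n$ terms and using the elementary bounds $\alpha!\le\nu!\,m^n$ (multinomial coefficient, via $\alpha!\le|\alpha|!\le n!$) and $(A|\alpha|^C)^{|\alpha|}\le(1+An^C)^n$ yields
$$|h^{(\nu)}(x)|\ \le\ \nu!\,\big(L_{i_0}A'n^{C'}\big)^{n}$$
for suitable $A',C'$ depending only on $m,A,C$, which is exactly the assertion that $h$ is $(L_{i_0}A',C')$-mild up to order $r$.

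The crux — and the only genuinely new ingredient — is the point-dependent choice of $i_1$: using weak mildness of $f$ by itself (or any fixed $i_1$) leaves a stray factor $\prod_i x_i^{-\nu_i}$, i.e.\ only \emph{weak} mildness of $h$. It is the hypothesis that the first partials $f^{(e_i)}$ are weakly mild — which replaces the denominator $y^\lambda$ by $y^{\lambda-e_{i_1}}$ and so leaves the unspent power $x_{i_1}^{L_{i_1}}$ — combined with $L_i\ge r\ge|\nu|$ and the choice of $i_1$ as the coordinate minimising $x_i$ over $\operatorname{supp}\lambda$, that lets all the negative coordinate powers be absorbed into powers $x_{i_1}^{L_{i_1}-n}\le 1$.
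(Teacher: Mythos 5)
Your proof is correct and follows essentially the same route as the paper's: Fa\`a di Bruno via Lemma~\ref{brunoroughmd}, the explicit bound on $\prod_j(g^{(\ell_j)})^{k_j}$ yielding the factor $L_{i_0}^n\prod_i x_i^{L_i\lambda_i-\nu_i}$, and the key point-dependent choice of which first partial $f^{(e_{i_1})}$ to peel off so that the unspent power $x_{i_1}^{L_{i_1}}$, with $L_{i_1}\ge r\ge n$, absorbs all negative exponents. In fact your version is marginally more careful than the paper's (which minimises $x_i$ over all $i$ rather than over $\operatorname{supp}\lambda$, glossing over the need for $\lambda_{i_1}\ge 1$); your observation that $\operatorname{supp}\nu\subseteq\operatorname{supp}\lambda$ makes the restriction to $\operatorname{supp}\lambda$ work cleanly.
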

\begin{proof}
By Proposition \ref{prop:multi-bruno} with $d=m$, we only have to estimate
$$
|f^{(\lambda)} \prod_{j=1}^s \big( g^{(\ell_j)} \big)^{k_j}|
$$
for $\nu$ with $1\leq |\nu|\leq r$ and $(\lambda,s,k,\ell)$ as in Proposition \ref{prop:multi-bruno}.
Fix $\nu $ with $1\leq |\nu|\leq r$ and write $|\nu|=n$.  If $n=1$ the statement follows from the conditions on $f^{(\beta)}$ for $\beta$ with $|\beta|\leq 1$. So let us suppose $n>1$. Fix $s$ with $1\leq s\leq n$ and $\lambda\in \NN^m$ with $|\lambda| \leq n$. Fix $x$ in $U$.
Choose $\lambda'$ and $\beta$ in $\NN^m$ with $\lambda' + \beta = \lambda$ and $|\beta|=1$. (Near the end we will optimize the choice of $\beta$, depending on $x$.)
By the weak $(A,C)$-mildness of $f^{(\beta)}$, we have
$$
|f^{(\lambda)}(x^L)|  = |( f^{(\beta)})^{(\lambda')} (x^L) | \leq \frac{ \lambda'! (A|\lambda'|^C)^{|\lambda'|} } {x^{L\lambda'}},
$$
where
$$
L\lambda' = (L_i\lambda'_i)_i.
$$
We further estimate
$$
|\prod_{j=1}^s \big( g^{(\ell_j)}(x) \big)^{k_j}| \leq |\prod_{j=1}^s M^{|\ell_j|\cdot|k_j|} x^{(L-\ell_j)k_j} |
$$
where $k_j$ and $\ell_j$ are as in (\ref{eq:brunocond}) and where
$$
(L-\ell_j)k_j = ((L_i-\ell_{ji})k_{ji} )_i.
$$
By (\ref{eq:brunocond}) it follows that
\begin{eqnarray*}
\frac{\prod_{j=1}^s M^{|\ell_j|\cdot|k_j|}  x^{(L-\ell_j)k_j} }{x^{L\lambda'}} & = & M^{\sum_{j=1}^s |\ell_j|\cdot|k_j|}  x^{-L\lambda' + \sum_{j=1}^s(L-\ell_j)k_j}\\
 & \leq & M^{n} x^{L\beta -  \sum_{j=1}^s \ell_jk_j},
\end{eqnarray*}
where $L\beta=(L_i\beta_i)_i$ and similarly $\ell_jk_j=(\ell_{ji}k_{ji})_i$.
Since this last inequality holds for any choice of $\beta$ with $|\beta|=1$ (and the corresponding $\lambda'$), let us choose $\beta$ with $\beta_{i_1}=1$ where $i_1$ is such that $x_{i_1}=\min_i x_i$, where the minimum is over $i$ with $\lambda_i>0$.
Now one has $|\sum_{j=1}^s \ell_jk_j |\leq n$ by (\ref{eq:brunocond}) and $|L\beta|=L_{i_1} \geq r \geq n$ and thus
$$
x^{L\beta -  \sum_{j=1}^s \ell_jk_j} \leq 1.
$$
Putting together we find
$$
|f^{(\lambda)} \prod_{j=1}^s \big( g^{(\ell_j)} \big)^{k_j}|\leq \lambda'! (A|\lambda'|^C)^{|\lambda'|}M^{n} \leq \nu ! (MAn^C)^{n},
$$
which finishes the proof.
\end{proof}

\subsection{a-b-m functions}\label{sec:a-b-m}

We introduce the notions of bounded-monomial functions and of a-b-m functions in Definition \ref{defn:b-m}, resp.~\ref{defn:prepared}.
Our motivation for the notion of a-b-m functions is threefold: Firstly, a-b-m functions behave well in the sense that they are weakly mild whenever valued in $[-1,1]$ (see Proposition \ref{prop:abm-weakly-mild}), and even more holds (see Corollary \ref{cor:abm-C1-weakly-mild}). Secondly, even better so than weakly mild functions, a-b-m functions can be rendered mild up to order $r$ by composing with power maps, see Proposition \ref{prop:pre-param-cell} (where the occurring roots behave better for a-b-m functions than for weakly mild ones). Thirdly, using a preparation result from Miller \cite{MillerD}, we can rather easily obtain parameterizations with a-b-m maps. Some more work is needed to make the parameterizations even better (essentially with some extra control on the first partial derivatives as in Theorem \ref{pre-param} \ref{pre4}), so that it can be combined with the power maps result \ref{prop:pre-param-cell}.
%Moreover, a-b-m functions are weakly mild whenever valued in $[-1,1]$ (see Proposition \ref{prop:abm-weakly-mild}), and even more holds (see Corollary \ref{cor:abm-C1-weakly-mild}).
%Corollary \ref{cor:abm-C1-weakly-mild} will play a key role in proving Theorem \ref{pre-param}.

%raf ideas: bounded monomial to include 0; weakly mild and mild to go to [-1,1].

\begin{defn}[bounded-monomial functions]\label{defn:b-m}
Let $U$ be a subset of $(0,1)^m$.
A function $b:U\to \RR$ is called bounded-monomial if it has bounded range and it is
of the form
$$
b(x) = x^\mu := \prod_{i=1}^m x_i^{\mu_{i}}
$$
for some $\mu_{i}$ in $\RR$. A map $U\to \RR^n$ is called bounded-monomial if all of its component functions are.
\end{defn}

\begin{defn}[a-b-m functions]\label{defn:prepared}
Let $U$ be a subset of $(0,1)^m$.
A function $f:U\to \RR$ is called a-b-m, in full analytic-bounded-monomial, if it is of the form
$$
f(x) = b_j(x) F( b_1(x),\ldots, b_s(x) )
$$
for some bounded-monomial map $b:U\to \RR^s$ for some $s$ and for some nonvanishing analytic function $F:V\to \RR$, where $V$ is an open neighborhood of $\overline{b(U)}$, the topological closure of $b(U)$ in $\RR^s$, and where $j$ lies in $\{1,\ldots,s\}$. We call the map $b$ an associated bounded-monomial map of $f$.

Finally, call a map $f:U\to \RR^n$ a-b-m, with associated bounded-monomial map $b$, if all its component functions are (namely, each $f_i$ is a-b-m, and, $b$ is an associated bounded-monomial map for each $f_i$).
\end{defn}

\begin{prop}\label{prop:abm-weakly-mild}
Let $h:U\subset (0,1)^m\to \RR$ be an a-b-m function. Suppose that $U$ is open and that $h(U)\subset [-1,1]$. Then the function $h$ is weakly $(A,C)$-mild up to order $+\infty$ for some $A>0$ and some $C\geq 0$.
\end{prop}
\begin{proof}
Any function $F:V\to [-1,1]$ on an open $V$ in $(0,1)^n$ such that $F$ is analytic on some open neighborhood of $\overline V$ (the topological closure of $V$ in $\RR^n$) is $(A_0,0)$-mild up to order $+\infty$ for some $A_0>0$, see e.g.~\cite{Gevrey}.
Also, for any real $S\geq 1$ and any bounded-monomial function $b:U\subset (0,1)^m\to (0,S) : x\mapsto x^\mu$ with $U$ open in $(0,1)^m$, the function $b/S$ is weakly $(A_1,C_1)$-mild up to order $+\infty$ for some $A_1>0$ and $C_1\geq 0$. One may for example take $A_1= t^{t}$ with $t=\max(2, |\mu_1|,\ldots,|\mu_m|)$ and $C_1=0$.
%$(n+\mu)! \leq n! \mu^n$.
The lemma now follows from the fact that a composition $f\circ g$ is automatically weakly $(A'',C'')$-mild up to order $r$, whenever $f:V\to \RR$ is $(A,C)$-mild up to order $r$ and $g:U\to V$ is weakly $(A',C')$-mild up to order $r$, with open sets $U\subset (0,1)^d$ and $V\subset (0,1)^m$, and where moreover $A''$ and $C''$ depend only on $A,A',C,C',m,d$.
This fact is easy to see as follows. As in the beginning of the proof for Proposition \ref{ell}, by Proposition \ref{prop:multi-bruno} we only have to estimate a single term of the form
$$
|f^{(\lambda)} \prod_{j=1}^s \big( g^{(\ell_j)} \big)^{k_j}|,
$$
for $(\lambda,s,k,\ell)$ as in Proposition \ref{prop:multi-bruno} and $\nu$ with $|\nu|\leq r$.
%Fix $\nu $ with $1\leq |\nu|\leq r$ and write $|\nu|=n$.
%Fix $s$ with $1\leq s\leq n$ and $\lambda\in \NN^m$ with $|\lambda| \leq n$.
By the $(A,C)$-mildness of $f$ up to order $r$, and assuming $A\geq 1$, we have
$$
|f^{(\lambda)}(x)| \leq \lambda! (A|\lambda|^C)^{|\lambda|} \leq \nu! (A n^C)^{n}.
$$
By the weak $(A',C')$-mildness of $g$ up to order $r$ we have
$$
|\prod_{j=1}^s \big( g^{(\ell_j)}(x) \big)^{k_j}| \leq | \prod_{j=1}^s \frac{ \ell_j! (A' |\ell_j|^{C'} )^{|\ell_j|\cdot |k_j|} } { x^{\ell_j |k_j|}   }   |
$$
where $k_j$ and $\ell_j$ are as in Proposition \ref{prop:multi-bruno}.
Now by (\ref{eq:brunocond}) of Proposition \ref{prop:multi-bruno}, and assuming $A'\geq 1$, one has
$$
|\prod_{j=1}^s \big( g^{(\ell_j)}(x) \big)^{k_j}| \leq
|  \frac{ \nu! (A' n^{C'} )^{\sum_j |\ell_j|\cdot |k_j|} } { x^{\sum_j \ell_j |k_j|}   }   |
\leq |  \frac{ \nu! (A' n^{C'} )^{n} } { x^{\nu }   }   |.
$$
Putting together we find $A''$ and $C''$ as desired.
\end{proof}

%A particularly nice kind of functions are

The a-b-m functions with an associated bounded-monomial map $b$ such that moreover $b$ has bounded $C^1$-norm have particularly nice properties as illustrated by the next two results.

\begin{cor}\label{cor:abm-C1-weakly-mild}
Let $f:U\subset (0,1)^m\to \RR$ be an a-b-m function %with $C^1$-norm bounded by $1$ and
with an associated bounded-monomial map $b$ such that $b$ has bounded $C^1$-norm. Then, for each $j=1,\ldots,m$,  the function $\partial f/\partial x_j$ is a finite sum of a-b-m functions on $U$. Hence, there is $\varepsilon>0$ such that the functions $\varepsilon f$ and $\varepsilon \partial f/\partial x_j$ are weakly $(A,C)$-mild up to order $+\infty$ for some $A>0$ and some $C\geq 0$.
 %and each $j=1,\ldots,m$.
\end{cor}
\begin{proof}
The first statement follows from the definition of a-b-m functions, the chain rule for derivation, and since the $C^1$-norm of $b$ is bounded. In detail,
write $f=b_{j_0}F(b)$ as in Definition \ref{defn:prepared} where the map $b:U\to\RR^s$ has bounded $C^1$-norm and $j_0\in\{1,\ldots,s\}$. Then
$$
\frac{\partial f}{\partial x_j}
$$
equals the sum of
$$
\frac{\partial b_{j_0}}{\partial x_j} F(b)
\mbox{ and the }
b_{j_0}\frac{\partial F(b)}{\partial b_\ell}  \frac{\partial b_\ell}{\partial x_j}
\mbox{ for }
\ell=1,\ldots,s.
$$
Clearly $(\partial b_{j_0}/\partial x_j) F(b)$  is a-b-m. For the other terms, one takes real $S$ such that $|(\partial F/\partial b_\ell)(b)|+1<S$ on $U$, and, one rewrites $\partial F/\partial b_\ell(b)$ as the sum of $S+\partial F/\partial b_\ell(b)$ with $-S$. Plugging this in gives $b_{j_0}(\partial F/\partial b_\ell(b) (\partial b_\ell/\partial x_j)$ as a sum of two terms which are both a-b-m.
%By the definition of a-b-m functions, the chain rule for derivation, and since the $C^1$-norm of each $b\in B$ is bounded, the function $\partial h/\partial x_j$ equals a finite sum of a-b-m functions $a_i:U\to\RR$.
The final part follows easily from Proposition \ref{prop:abm-weakly-mild}.
\end{proof}

%\begin{cor}\label{cor:abm-C1-weakly-mild}
%Let $h:U\subset (0,1)^m\to \RR$ be an a-b-m function %with $C^1$-norm bounded by $1$ and
%with a set of associated bounded-monomial functions $B$ such that moreover each $b\in B$ has bounded $C^1$-norm. Then there is $\varepsilon>0$ such that the functions $\varepsilon h$ and $\varepsilon \partial h/\partial x_j$ are weakly $(A,C)$-mild up to order $+\infty$ for some $A>0$, some $C\geq 0$ and each $j=1,\ldots,m$.
%\end{cor}
%\begin{proof}
%By the definition of a-b-m functions, the chain rule for derivation, and since the $C^1$-norm of each $b\in B$ is bounded, the function $\partial h/\partial x_j$ equals a finite sum of a-b-m functions $a_i:U\to\RR$. Now the corollary follows easily from Proposition \ref{prop:abm-weakly-mild}.
%\end{proof}

Similarly to Proposition \ref{ell}, cylindrical sets and their walls are rendered mild up to order $r$ after suitable composition with power maps, when the initial walls are nice enough, see the following definitions and Proposition \ref{prop:pre-param-cell}. Here, a-b-m functions show their essential use.

\begin{defn}[Cylindrical sets and their walls]
A subset $C\subset \RR^n$ is called a cylindrical set, if
$$
C = \{x\in \RR^n\mid \wedge_{i=1}^n\,  \alpha_i(x_{<i}) \sq_{i1} x_i \sq_{i2} \beta_i(x_{<i}) \}
$$
for some continuous functions $\alpha_i$ and $\beta_i$ with $\alpha_i < \beta_i$, $x_{<i} = (x_1,\ldots,x_{i-1})$, and with $\sq_{i1}$ either $=$, $<$, or no condition, and with $\sq_{i2}$ either $<$ or no condition, with the conventions that $\sq_{i2}$ is no condition if $\sq_{i1}$ is equality. If $\sq_{i1}$ is $=$ or $<$ then we call $\alpha_i$ a wall of $C$. Likewise, if $\sq_{i2}$ is $<$ then we also call $\beta_i$ a wall of $C$.
\end{defn}

%\begin{prop}\label{prop:pre-param-cell}
%Let $U\subset (0,1)^m$ be a cylindrical set which is moreover open in $(0,1)^m$. Suppose for each wall $\alpha$ of $U$ that $\alpha$ is a-b-m with an associated bounded-monomial map $b$ such that moreover $b$ has bounded $C^1$-norm. For any integer $r>0$, any $\varepsilon>0$, write $\varphi_{r,\varepsilon}$ for the map sending $x\in(0,1)^m$ to
%$$
%\varphi_{r,\varepsilon}(x) = (\varepsilon^m x_1^{r^m},\varepsilon^{m-1} x_2^{r^{m-1}},\ldots,\varepsilon x_m^{r}),
%$$
%write $\varepsilon U$ for $\{(\varepsilon x_1,\ldots, \varepsilon x_m)\mid x\in U\}$,
%and $U_{r,\varepsilon}$ for $\varphi_{r,\varepsilon}^{-1}(\varepsilon U)$.
%Then there exist $\varepsilon>0$, $A>0$ and $C\geq 0$, depending only on $U$, such that $U_{r,\varepsilon}$ is an open cylindrical set whose walls are $(r^{m}A,C)$-mild up to order $r$ for each $r>0$.
%\end{prop}
\begin{prop}\label{prop:pre-param-cell}
Let $U\subset (0,1)^m$ be a cylindrical set which is moreover open in $(0,1)^m$. Suppose for each wall $\alpha$ of $U$ that $\alpha$ is a-b-m with an associated bounded-monomial map $b$ such that moreover $b$ has bounded $C^1$-norm. For any integer $r>0$, write $\psi_{r}$ for the map sending $x\in(0,1)^m$ to
$$
\psi_{r}(x) = (x_1^{r^m},x_2^{r^{m-1}},\ldots,x_m^{r}),
$$
and write $U_{r}$ for $\psi_{r}^{-1}(U)$.
Then there exist $A>0$ and $C\geq 0$, depending only on $U$, such that $U_{r}$ is an open cylindrical set whose walls are $(r^{m}A,C)$-mild up to order $r$ for each $r>0$.
\end{prop}
\begin{proof}
Let $\alpha$ be a wall of $U$, say, bounding the $i$-th variable from below. Then the corresponding wall $\alpha_r$ of $U_r$ (bounding the $i$-th variable from below) satisfies
\begin{equation}\label{eq:alphar}
\alpha_r(x_{<i}) %= \sqrt[ k ]{h(x) ) }
= \sqrt[ r^{m - i + 1} ]{\alpha( x_1^{r^m},x_2^{r^{m-1}},\ldots,x_{i-1}^{r^{m-i+2}} )}.
\end{equation}
%with $k=r^{m - i + 1}$ and $h(x)= \alpha (\varphi_r(x))=\alpha( x_1^{r^m},x_2^{r^{m-1}},\ldots,x_{i-1}^{r^{m-i+2}} )$.
We show the existence of $A'>0$ and $C'\geq 0$ such that $\alpha_r$ is $(r^mA',C')$-mild up to order $r$, where $A'$ and $C'$ depend only on $U$.
If $\alpha>\varepsilon$ on $U$ for some $\varepsilon>0$, then the existence of $A'$ and $C'$ as desired follows easily from (\ref{eq:alphar}), Proposition \ref{ell} and the chain rule. If $\alpha$ is identically zero then so is $\alpha_r$ and $A'$ and $C'$ exist clearly. Since any a-b-m function $f$ is the product of a bounded-monomial function $b_j$ with an a-b-m function $F(b)$ with $|F(b)|>\varepsilon$ for some $\varepsilon>0$, and since products behave well for mildness up to order $r$, it is enough to show the existence of $A'>0$ and $C'\geq 0$  in the case that $\alpha$ is itself bounded-monomial. That is, we may suppose that there is $\mu$ in $\RR^{m}$ with
\begin{equation}\label{eq:alpha}
\alpha(z_{<i}) = z^{\mu}
\end{equation}
for $z\in U$.
%We can see $\alpha_r$ as the composition of the map sending $x$ to $x^r:=(x_1^r,\ldots,x_m^r)$ and the map
%MFor any $k\geq 1$ and $y\in (0,1)^m$ write $y^k$ for $(y_1^k,\ldots,y_{m}^k)$ and let $V_k$ consists of $y\in (0,1)^m$ with $y^k\in U$.
For $k\geq 1$ let $V_k$ be the set consisting of $y\in (0,1)^m$ with $(y_1^k,\ldots,y_{m}^k)$ in $U$ and let $h_k$ be the function
$$
h_k:V_k\to \RR:y\mapsto y^\mu.
$$
We claim that there exist constants $S>0$, $A_0>0$ and $C_0\geq 0$, depending only on $U$ (and not on $k$), such that for each $\beta\in \NN^m$ with $|\beta|\leq 1$ the function
$$
\frac{ |h_k^{(\beta)}| }{S}
$$
is weakly $(A_0,C_0)$-mild.
By (\ref{eq:alphar}) and (\ref{eq:alpha}), $\alpha_r$ is the composition of $h_k$ with $x\mapsto x^L$ for some $k\geq 1$ and some $L$ in $\NN^m$ with $r\leq L_i\leq r^{m-1}$ for each $i$. Thus, the existence of $A'$ and $C'$ now follows from the claim and Proposition \ref{ell}.
%The claim itself follows from the bounds we are given on $|\alpha^{(\lambda)}(z)|$ for $z\in U$ and for $\lambda\in\NN^m$, and from
%mildness of the functions $\alpha^{(\beta)}$ for each $\beta\in \NN^m$ with $|\beta|\leq 1$ and the observation that, for any $\lambda\in \NN^m$, one has
There is only left to prove the claim to finish the proof.
Since both $\alpha$ and $h_k$ are monomials with exponent tuple $\mu$ (but with different domain), we find by deriving
\begin{equation}\label{eq:alphahk}
|h_k^{(\lambda)}(y)| = c(\lambda,k,\mu)|\alpha^{(\lambda)}(z)|^{1/k}
\end{equation}
with $z = (y_1^k,\ldots,y_{m}^k)$, and, with
\begin{equation}\label{eq:alphack}
c(\lambda,k,\mu)< \lambda! (A_1|\lambda|^{C_1})^{|\lambda|}
\end{equation}
for some $A_1>0$ and $C_1\geq 0$ depending only on $U$. The claim now follows from the bounds on $|\alpha^{(\lambda)}(z)|$ for $z\in U$ and $\lambda\in\NN^m$ that follow from the facts that $\alpha$ is bounded-monomial with bounded $C^1$-norm.
%Here, (\ref{eq:alphahk}) and (\ref{eq:alphack}) follow from derivation of the monomials $z^\mu$ and $y^\mu$. %\max(2, |\mu_1|,\ldots,|\mu_m|)^{|\lambda|}$.
\end{proof}

\subsection{Pre-parameterization and the proof of Theorem 2.1.3}\label{sec:first:param}

Let us fix some terminology for the rest of Section \ref{sec:Proof:C^r} (similar to convention 2.1.1). We will work with certain reducts of $\Ranp$, following \cite{MillerD}.
For any language $\cL$ on $\RR$ and any subfield $K$ of $\RR$, let us denote by $\cL^K$ the expansion of $\cL$ by the functions
$$
x \mapsto \begin{cases} x^\rho, \mbox{ if } x> 0,\\ 0 \mbox{ otherwise, }\end{cases}
$$
for $\rho\in K$. Let $\cL_{an}$ be the subanalytic language (in particular, the $\cL_{an}$-definable subsets of $\RR^n$ are precisely the globally subanalytic subsets of $\RR^n$).
Let $\cF$ be a Weierstrass system and let $\cL_\cF$ be the corresponding language as in \cite[Definition 2.1]{MillerD}. (The language $\cL_\cF$ is always a reduct of $\cL_{an}$.) By the field of exponents of $\cF$ is meant the set of real $r$ such that $(0,1)\to\RR:x\mapsto (1+x)^r$ is $\cL_\cF$-definable; this set is moreover a field, see Remark 2.3.5 of \cite{MillerD}.
Let $K$ be a subfield of the field of exponents of $\cF$. From now on up to the end of Section \ref{sec:Proof:C^r} we will work with the $\cL_\cF^K$-structure on $\RR$, definable will mean $\cL_\cF^K$-definable, and, we say cell for a definable cylindrical set.
Note that $\cL_{an}^\RR$ is an example of such a language, whose structure on $\RR$ was denoted by $\Ranp$ above. %We will use some reducts of $\cL_{an}^\RR$, following \cite{MillerD}.

We can now state our pre-parameterization result, that generates $C^r$-parameterizations essentially by composing with power maps, and where possible parameter dependence is built in via the triangularity property \ref{pre3}.

\begin{thm}[Pre-parameterization]\label{pre-param}
Let $X\subset (0,1)^{n}$ be definable, and suppose that $X$ is the graph of a definable function $f: U\to (0,1)^{n-m}$ for some $m\geq 0$ and open set $U\subset (0,1)^m$. Then there exist finitely many definable maps
$$
\varphi_i:U_i\to X
$$
such that the following hold
\begin{enumerate}[label = \textup{(\arabic*)}, ref = (\arabic*)]
\item\label{pre1} $\bigcup_i \varphi_i(U_i)=X$.

\item\label{pre2} Each $U_i$ is an open cell in $(0,1)^m$. %(A cell is a cylindrical set which is moreover definable.)

\item\label{pre3} Each $\varphi_i$ is a triangular map, in the sense that for each $j\leq m$ there is a unique map $\Pi_{<j}(U_i)\to \Pi_{<j}(X)$ making a commutative diagram with $\varphi_i$ and the projection maps $X\to
    \Pi_{<j}(X)= \Pi_{<j}(U)$ and $U_i\to \Pi_{<j}(U_i)$, with in both cases $\Pi_{<j}$ the projection on the first $j-1$ coordinates.

\item\label{pre4} %For $g$ any of the functions $\varphi_i$ or any of the cell walls of $U_i$ and for $j=1\ldots,m$, the function $g$ as well as $\partial g/\partial x_j$ are a-b-m.
For each $i$, the map $\varphi_i$ and the walls $\alpha$ of $U_i$ %, and the maps $\partial \varphi_{i}/\partial x_j$ and $\partial \alpha/\partial x_j$
are a-b-m with an associated bounded-monomial map $b_i$ such that $b_i$ has bounded $C^1$-norm. % at most $1$.
%For each $i$ and each $j=1\ldots,m$, the maps $\varphi_{i}$, the walls $\alpha$ of $U_i$ and the functions $\partial \varphi_{i}/\partial x_j$ and $\partial \alpha/\partial x_j$ are weakly mild. %, with $\varphi_{i\ell}$ the component function of $\varphi_i$.
%\item\label{pre5} For any wall $\alpha$ of $U_i$, either $\alpha$ is bounded-monomial, or, identically vanishing, or,
%$\alpha>1/2$ on $U_i$.
\end{enumerate}
\end{thm}

In a way, property \ref{pre4} is a key new property for parameterizations, and it may be compared with preparation results from \cite{NguyenValette} and Lipschitz continuity results from \cite{Kurdyka}, \cite{Pawlucki} and \cite{Valette2}.

Theorem 2.1.3 follows directly from Theorem \ref{pre-param}, Propositions \ref{ell}, \ref{prop:pre-param-cell}, Corollary \ref{cor:abm-C1-weakly-mild} and Lemma \ref{rem:mild},   as follows.

\begin{proof}[Proof of Theorem 2.1.3]
Up to finite partitioning and up to transforming $T$ if necessary, we may suppose that $X=\{(t,x)\mid t\in T,\ x\in X_t\}$ equals the graph of a function $f:U\subset T\times (0,1)^m\to (0,1)^{n-m}$ where $U$ is an open cell in $(0,1)^{k+m}$. Apply Theorem \ref{pre-param} to $X$ to find maps $\varphi_i$ on cells $U_i\subset (0,1)^{k+m}$.
%Choose a small enough $\varepsilon>0$ as in \ref{prop:pre-param-cell} for $U_i$ and as in Corollary \ref{cor:abm-C1-weakly-mild} for the maps $\varphi_i$ as in Proposition \ref{prop:pre-param-cell}.
For any integer $r>0$ %and any $\xi\in (0,1)^{k+m}$, one
write $U_{i,r}$ for the set of $(t,x)\in (0,1)^{k+m}$ such that
$$
\psi_r(t,x):=(t_1^{r^{k+m}},\ldots,t_k^{r^{1+m}},x_1^{r^m},\ldots,x_m^r)
$$
lies in $U_i$ and write $\varphi_{i,r}:U_{i,r}\to X$ for the composition of $\varphi_i$ with $\psi_{r}$. Now it follows from Propositions \ref{ell},  \ref{prop:pre-param-cell}, Corollary \ref{cor:abm-C1-weakly-mild}, Lemma \ref{rem:mild} and by transforming the $T$-space back into the original form using property \ref{pre3}, that there are $c$ and $d$, and for each $r$ definable functions $\phi_{r, i, t,0} : V_{r,i,t,0}\subset (0,1)^m\to X_t$ with $i=1,\ldots,cr^d$, such that $V_{r,i,t,0}$ is an open cell whose walls have $C^r$-norm at most $1$ and the functions $\phi_{r, i, t,0}$ have $C^r$-norm at most one. By finally composing with the obvious triangular map, namely, if the $i$th variable runs between the walls $\alpha_i$ and $\beta_i$ in $V_{i,r,t,0}$ then one composes with $$
(0,1)^m\to V_{r,i,t,0} : z \mapsto (\alpha_i + (\beta_i-\alpha_i)z_i)_i,
$$
one gets the maps $\phi_{r, i, t}$ with domain $(0,1)^m$ as desired by Theorem 2.1.3.
\end{proof}

%Up to finite partitioning and transforming $T$ if necessary, we may suppose that $X=\{(t,x)\mid t\in T,\ x\in X_t\}$ equals the graph of a function $f:U\subset T\times (0,1)^m\to (0,1)^{n-m}$ where $U$ is an open cell in $(0,1)^{k+m}$. Apply Theorem \ref{pre-param} to $X$ to find maps $\varphi_i$ on cells $U_i$.
%%Choose a small enough $\varepsilon>0$ as in \ref{prop:pre-param-cell} for $U_i$ and as in Corollary \ref{cor:abm-C1-weakly-mild} for the maps $\varphi_i$ as in Proposition \ref{prop:pre-param-cell}.
%For any integer $r>0$ %and any $\xi\in (0,1)^{k+m}$, one
%write $U_{i,r,\xi}$ for the set of $(t,x)\in (0,1)^{k+m}$ such that
%$$
%\xi+t^rx^r:=(\xi_1+t_1^r,\ldots,t_k^r,x_1^r,\ldots,\xi_{k+m}+x_m^r)
%$$
%lies in $U_i$ and one writes $\varphi_{i,r}:U_{i,r,\xi}\to X$ for the composition of $\varphi_i$ with $(t,x)\mapsto \xi+t^rx^r$. By varying $\xi$, by composing with the obvious triangular map (namely, for a variable $z$ running in $U_{i,r,\xi}$ between the walls $\alpha$ and $\beta$, one sends $z$ to $\alpha + {\beta-\alpha}z$), and by transforming the $T$-space back into the original form, one gets the conclusion of Theorem 2.1.3 as desired from Lemma \ref{rem:mild}.

The remainder of Section \ref{sec:Proof:C^r} is devoted to the proof of Theorem \ref{pre-param}.

\subsection{The preparation result from \cite{MillerD} and proof of Theorem \ref{pre-param}}\label{sec:preparation:Miller}

We recall the preparation result from \cite{MillerD} for definable functions, where definable means $\cL_\cF^K$-definable as convened in Section \ref{sec:first:param}.  This preparation result is crucial in the proof of Theorem \ref{pre-param}.

Let $C\subset \RR^n$ be a cell. Write $\Pi_{<n}:\RR^n\to\RR^{n-1}$ for the projection sending $x$ in $\RR^n$ to $x_{<n}
=(x_1,\ldots,x_{n-1})$.

\begin{defn}[Prepared with centre]\label{defn:prep:center}
Let $\theta:\Pi_{<n}(C)\to\RR$ be a continuous, definable function whose graph is either disjoint from $\overline C$, or, contained in $\overline C\setminus C$, where $\overline C$ stands for the topological closure of $C$ in $\RR^n$. Furthermore, suppose that either $\theta=0$ on $B$, or $\theta(x_{<n})\sim x_n$ for $x\in C$, meaning that there is $S>1$ such that $x_n/S\leq \theta(x_{<n})\leq S x_n$ for all $x\in C$. Then $\theta$ is called a centre for $C$.
A definable function $f:C\to\RR$ with bounded range is called prepared with centre $\theta$ if $f$ can be written as
$$
f(x) =  b_j(x) F( b(x) )
$$
with some nonvanishing analytic function $F:V\to \RR$, where $V$ is an open neighborhood of $\overline{b(C)}$, the topological closure of $b(C)$ in $\RR^s$, and where $b:C\to \RR^s$ is a map with bounded range whose component functions $b_i$ have the form
$$
x\mapsto a_i(x_{<n}) |x_n - \theta(x_{<n})|^{r_i}
$$
with $r_i\in K$ and where $a_i:\Pi_{<n}(C)\to\RR$ is definable, and where $j$ lies in $\{1,\ldots,s\}$.

We call the map $b$ an associated bounded range map for $f$.
Say that a definable map $f:C\to\RR^n$ for $n\geq 1$ is prepared with centre $\theta$, and associated bounded range map $b$, if all its component functions are (namely, each $f_i$ is prepared with centre $\theta$ and associated bounded range map $b$.) % $B$ is a set of associated bounded range functions for each $f_i$).
\end{defn}

Note that the a-b-m functions of Definition \ref{defn:prepared} are much more specific than the prepared functions with centre $\theta$ of Definition \ref{defn:prep:center}. Indeed, a-b-m functions are, in a way, prepared in each variable instead of in one variable only and moreover with centre(s) zero.

\begin{prop}[Preparation of definable functions, {\cite[Main Theorem]{MillerD}}]\label{prop:preparation:Miller}
Let  $f:X\subset \RR^n \to\RR^N$ be a definable map on a definable set $X$. Suppose that the range of $f$ is bounded.
Then there exists a finite partition of $X$ into definable cells $C_i$ with some centre $\theta_i$ such that $f_{|C_i}$ is prepared with centre $\theta_i$ for each $i$.
\end{prop}

%\subsection{Proof of the pre-parameterization result \ref{pre-param}}\label{sec:preparam:proof}

%In order to finish the proof of Theorem 2.1.3, there is only left to prove
%Theorem \ref{pre-param}.

%\begin{enumerate}[label = (6)]
%\item\label{pre6}  For each $i$, the maps $\varphi_i$ and the walls of $U_i$ are a-b-m with a set of associated bounded-monomial functions $B_i$ such that $b\leq 1$ and $|\partial b / \partial x_j|\leq 1$ for each $b\in B_i$ and each $j=1,\ldots,m$.
%%For any $i$, $\varphi_i$ and each cell wall of $\cU_i$ has a set $B$ of associated bounded-monomial functions such that $|\partial b / \partial x_j|$ is bounded by $1$ for each $b\in B$ and each $j=1,\ldots,m$.
%%%For $g$ any of the associated bounded-monomial maps to the functions $\varphi_i$ and to the cell walls of $\cU_i$, and for $j=1,\ldots, m$, the function $|\partial g/\partial x_j|$ is bounded by $1$.
%\end{enumerate}

We are now ready to prove Theorem \ref{pre-param}.

\begin{proof}[Proof of Theorem \ref{pre-param}]
%We will at various places use the following:
%\begin{enumerate}[label = (a)]
%\item\label{prea} Given $X$, it is enough to prove the existence of the maps $\varphi_i$ for
%$X/S:=\{(x_1/S,\ldots,x_n/S)\mid x\in X\}$ for some $S\geq 1$ instead of for $X$ itself.
%\end{enumerate}
%On top of \ref{prea},
%By Corollary \ref{cor:abm-C1-weakly-mild}, it is enough to prove Theorem \ref{pre-param} with property \ref{pre6} for the $\varphi_i$ instead of \ref{pre4}, where: % the following extra property: %on top of properties (1)-(5):
%\begin{enumerate}[label = (4$\star$)]
%\item\label{pre6}  For each $i$ and each $j=1,\ldots,m$, the maps $\varphi_i$ and the walls $\alpha$ of $U_i$ %, and the maps $\partial \varphi_{i}/\partial x_j$ and $\partial \alpha/\partial x_j$
%are a-b-m with $C^1$-norm at most $1$ and with a set of associated bounded-monomial functions $B_i$ such that moreover each $b\in B_i$ has $C^1$-norm at most $1$.
%\end{enumerate}
We proceed by induction on $m$, the case $m=0$ being trivial. Suppose $m\geq 1$.
By o-minimal cell decomposition (see \cite[Chapter 3]{vdD}), we can easily find definable maps $\varphi_i:U_i\to X$ satisfying  \ref{pre1}, \ref{pre2} and \ref{pre3}.
By Proposition \ref{prop:preparation:Miller} and up to further finite partitioning, we may furthermore suppose for each $i$ that $U_i$ has a centre $\theta_i$ and that the maps $\varphi_i$
are prepared with centre $\theta_i$ and an associated bounded range map $b_i$.
We may even suppose that $\theta_i=0$ on $U_i$, up to translating in the $x_m$ variable. Indeed, by Definition \ref{defn:prep:center} there is $S>0$ such that $|\theta_i|+1<S$ on $U_i$, and we may suppose (up to further finite partitioning) that either $x_m - \theta_i>0$, or, $x_m - \theta_i<0$ on $U_i$. In the case that $x_m - \theta_i>0$ on $U_i$ (the other case is similar), one replaces $U_i$ by the cell
$$
\tilde U_i:=\{x\in (0,1)^m\mid  (x_{<m}, Sx_m  + \theta_i(x_{<m}) ) \in U_i \}
$$
and $\varphi_i$ by
$$
\tilde \varphi_i:\tilde U_i\to X: x\mapsto \varphi_i(x_{<m}, Sx_m  + \theta_i(x_{<m}) ).
$$
By a classical technique (with inverse functions) we will ensure moreover that $b_i$ is $C^1$ and that $|\partial b_{i,j}/\partial x_m|\leq 1$ for each component function $b_{i,j}$ of $b_i$. %, namely by the following argument with inverse functions. % follows. %$|\partial \varphi_i/\partial x_m| \leq 1$, as follows.
Up to partitioning $U_i$ into finitely many definable pieces and neglecting pieces of lower dimension by induction on $m$, we may suppose that $b_i$ is $C^1$ on $U_i$ and that there is $j$ such that $|\partial b_{i,j}/\partial x_m|$ is maximal on $U$, in the sense that $|\partial b_{i,j}(x)/\partial x_m|\geq |\partial b_{i,j'}(x)/\partial x_m|$ on $U$ for any $j'$. Similarly, for this $j$ we may furthermore suppose that either $|\partial b_{i,j}/\partial x_m|\leq 1$ on $U_i$, or, that $|\partial b_{i,j}/\partial x_m| >  1$ on $U_i$. In the first case, $U_i$ and $\varphi_i$ are as desired.
In the second case, we may, up to finite partitioning and using o-minimality as before, suppose that for each $x_{<m} = (x_1,\ldots,x_{m-1})$ the function sending $x_m$ to $b_{i,j}(x_{<m},x_m)$ is injective. %, with inverse function denoted by $g_{x_{<m}}$.
Let $\tilde U_i$ be the image of $U_i$ under the map sending $x$ to $(x_{<m},b_{i,j}(x))$, and let $\tilde \varphi_i:\tilde U_i\to U_i$ be the inverse function of this map. Now $\tilde \varphi_i$ is as desired, by the chain rule. In particular, $\tilde \varphi_i$ is prepared with centre zero and associated bounded-range map $\tilde b_i$ with $|\partial \tilde b_{i,j}/\partial x_m|\leq 1$ for each component function $\tilde b_{i,j}$ of $\tilde b_i$.

%Finally, we may furthermore suppose that the $C^1$-norm of $b_i$ is strictly less than $1$ (indeed, by Definition \ref{defn:prep:center} there is flexibility in choosing the associated bounded range map, and by the argument with inverse functions as above).

%More generally but similarly, we may suppose that we have definable maps $\varphi_i$ satisfying \ref{pre1}, \ref{pre2}, \ref{pre3} and $|\partial \varphi_i/\partial x_j|\leq 1$ on $U_i$ for each $i$ and each $j$.
%Next we prove that we can obtain \ref{pre1}, \ref{pre2}, \ref{pre3}, \ref{pre4}, and \ref{pre6}.

\par
By the flexibility in Definition \ref{defn:prep:center} for choosing the associated bounded range map, we may from now on suppose that we have definable maps $\varphi_i$ satisfying \ref{pre1}, \ref{pre2}, \ref{pre3}, that the maps $\varphi_i$
are prepared with centre $0$ and associated bounded range map $b_i$ such that moreover
$$
|b_{i,j}|<1-\varepsilon,\mbox{ and } |\frac{\partial b_{i,j}}{\partial x_m}|<1-\varepsilon
$$
for each component function $b_{i,j}$ of $b_i$ and some $\varepsilon>0$.

\par
We will now construct a finite collection of definable maps
$$
\varphi_{i\ell}:U_{i\ell}\to X
$$
satisfying Properties \ref{pre1}, \ref{pre2}, \ref{pre3}, and \ref{pre4}, using the data we have so far. % up to invoking \ref{prea} to replace $X$ by $X/S$.
%Let $\cG_0$ % =\cG_0(i)$
%be the collection of the maps $b$ in $B_i$ and the maps $\partial b / \partial x_m$ for $b\in B_i$.
For %each $b$ in $B_i$,
each wall $\alpha$ of $U_i$ bounding $x_m$, and with $h$ being either $b_i$ or $\partial b_i / \partial x_m$, %$g$ in $\cG_0$ and each wall $\alpha$ of $U_i$ bounding $x_m$,
let $h_{\alpha}$ be the map
\begin{equation}\label{halpha}
h_{\alpha} :  \Pi_{<m}(U_{i})\to (-1,1)^s: x_{<m} \mapsto \lim_{x_m\to \alpha(x_{<m} ) } h(x_{<m},x_m),
\end{equation}
where $\Pi_{<m}(U_{i})$ is the image of $U_i$ under the coordinate projection $\Pi_{<m}$ sending $x$ to $x_{<m}$.
%Each of the functions in $\cG_i$ has bounded range. Hence, we can take $S>1$ and for each $g\in\cG_i$ a constant $\varepsilon_g\in\{-1,1\}$ such that
%$$
%0\leq \varepsilon_g g/S <1
%$$
%on $\Pi_{<m}(U_{i})$. Moreover,
This limit always exists by Definition \ref{defn:prep:center} and the form of $h_{\alpha}$.
Let $\cG_i$ be the collection of functions on $\Pi_{<m}(U_{i})$ consisting of %following functions on $\Pi_{<m}(U_{i})$ : %the coordinate functions $x_1,\ldots,x_{m-1}$,
the maps $h_\alpha$ from (\ref{halpha}) and the walls $\alpha$ of $U_i$ bounding $x_m$.
%We may suppose for each $g\in\cG_i$ that $g$ is either nonvanishing, or, identically zero.
Up to partitioning $U_i$ further we may suppose for each $g\in\cG_i$ that either $g$ is constant, or, $0<|g|<1$ on $U_i$ (or both).  Consider the map $F_i$ whose component functions are the maps $|g|$ for those $g$ in $\cG_i$ which are nonconstant.
%$$
%\Pi_{<m}(U_{i})\to (0,1):x_{<m}\mapsto \varepsilon_g g(x_{<m})/S
%$$
%for those
%$g$ in $\cG_i$ which are nonzero.
%$b$ any of the associated bounded-range maps, and for $g$ being $b$ or being $\partial b / \partial x_m$ and for each cell wall $\alpha$ bounding $x_m$, the maps $g(t,x_{<m},\alpha(t,x_{<m}))$.
%By induction on $m$ we may suppose that the maps $\varphi_i$ satisfies properties \ref{pre1} up to \ref{pre6}. ...
Apply the induction hypothesis to the map $F_i$ instead of $f$ %, and thus with $\textrm{Graph}(F_i)$ instead of $X$.
to find a finite collection of maps $\psi_{i\ell }:V_{i\ell }\to \textrm{Graph}(F_i)$ satisfying properties \ref{pre1}, \ref{pre2}, \ref{pre3}, and \ref{pre4}, with $\textrm{Graph}(F_i)$ in the role of $X$, and with associated bounded-monomial maps  $c_{i\ell}$ .
Plugging the newly obtained maps $\psi_{i\ell }$ in the previously obtained maps $\varphi_i$ we get Properties \ref{pre1}, \ref{pre2}, \ref{pre3}, and \ref{pre4} for $X$. With more detail, let $U_{i\ell }$ be the cell
$$
\{x\in (0,1)^m \mid (\psi_{i\ell }( x_{<m} )_{<m},x_m) \in U_i \}
%\alpha(\psi_{i\ell }( x_{<m} )_{<m}) < x_m   < \beta(\psi_{i\ell }( x_{<m} )_{<m})\}
$$
%where we write $\psi_{i\ell }^\bullet$ for $\psi_{i\ell }( x_{<m} )_{<m}$ and
%where $\alpha$ is the wall of $U_i$ bounding $x_m$ from below, and $\beta$ the wall of $U_i$ bounding $x_m$ from above.
and let $\varphi_{i\ell }:U_{i\ell } \to X $ be the map %sending $x\in U_{i\ell }$ to
$$
x\mapsto \varphi_i(\psi_{i\ell }( x_{<m} )_{<m} , x_m  ).
$$
By the above application of the induction hypothesis the function
$$
b_i\circ \varphi_{i\ell}
$$
is a-b-m with an associated bounded-monomial map $d_{i\ell}$ with bounded $C^1$-norm. Let $b_{i\ell}$ be the map  $(c_{i\ell},d_{i\ell})$.
%Finally, let $B_{i\ell}$ be the cooresponding set of bounded-monomial functions being the union of $C_{i\ell}$ with
Then the maps $\varphi_{i\ell }$ satisfy \ref{pre1}, \ref{pre2}, \ref{pre3} and \ref{pre4} with associated bounded-monomial maps $b_{i\ell }$. %  for $\varphi_{i\ell}$.
This finishes the proof of Theorem \ref{pre-param}.
\end{proof}

\begin{rem}
To address the mentioned question of \cite[below Remark 3.8]{Burguet-Liao-Yang} it seems useful to bound the number of maps $\varphi_i$ in Theorem \ref{pre-param} (in the case that $X$ is semi-algebraic) in terms of the semi-algebraic complexity of $X$. It also seems interesting to control the degree $d$ of the polynomial $cr^d$ of Theorem 2.1.3 in terms of $m$.
Also note that the maps $\phi_{r,i,t}$ of Theorem 2.1.3 are analytic on their domain for each $r,i,t$ (by their construction based on a-b-m maps), but, in general, they may not extend to an analytic map on an open neighborhood of the closed box $[0,1]^m$ (indeed, a-b-m maps don't extend in general).
\end{rem}

\section{The proof of the quasi-parameterization theorem}

In this section we prove Theorem 2.2.3. First of all, however, we require some general results about definable families
of holomorphic functions. The definability here is with respect to an {\it arbitrary} polynomially bounded
o-minimal expansion of the real field, which we now fix. Let us recall the following definition from subsection 2.2, from where
we also recall that $\Delta (R)$ denotes the (open) disc in $\C$ of radius $R$ and centred at the origin.

\vspace{3mm}

\noindent
{\bf Definition 5.1}  \hspace{1mm} A definable family $\Lambda = \{F_t : t \in T \}$ is called an $(R, m, K)$-family, where $R$, $K$ are positive real numbers and
$m$ is a positive integer, if for each $t \in T$, the function $F_t : \Delta (R)^m \to \C$ is holomorphic and for all $z \in \Delta(R)^m$,
$|F_t (z)| \leq K$.

\vspace{3mm}

Let us first observe that for $\Lambda$ such a family it follows from the Cauchy inequalities  that we have the following bounds
on the Taylor coefficients of each $F_t$ at $0 \in \C^m$
(where $\alpha! := \alpha_1 ! \cdots \alpha_m !$ for $\alpha = \langle \alpha_1 , \ldots, \alpha_m \rangle \in \N^m$).

\vspace{2mm}

\noindent 5.1.1 \hspace{1mm} For all $\alpha \in \N^m$ and all $t \in T$, \hspace{2mm} $\aF
\leq \frac{K}{R^{|\alpha|}}$.

\vspace{2mm}

(For all general results from the theory
of functions of several complex variables we refer the reader to the first chapter of \cite{Herve}.)

In particular, if $R>1$ then $\aF \to 0$ as $|\alpha| \to \infty$  and so for each $t \in T$ there exists some $M_t \in \N$
such that

\vspace{2mm}

\noindent 5.1.2 \hspace{1mm} for all $\alpha \in \N^m$, $\aF \leq$ max$\{ \aF : \alpha \in \N^m , |\alpha| \leq M_t \}$.

\vspace{2mm}

The crucial uniformity result, from which the quasi-parameterization theorem will follow, is that $M_t$ may be chosen to be independent
of $t$. This in turn will follow from the maximum modulus theorem and the following general result.

\vspace{3mm}

\noindent {\bf Lemma 5.2}  \hspace{1mm} {\it Let $1<r<R$, $0< \lambda \leq \h$ and let $\{ \theta_t : t \in T \}$ be a definable family of functions from $(0, R)$ to $(0, R)$.
Then there exists $\epsilon \in (0, \lambda)$ such that for all $t \in T$, there exists $y_t \in (r, R)$ such that
$\theta_t (y_t - \epsilon ) \geq \frac{1}{2} \theta_t (y_t )$.}

\begin{proof}

Suppose not. Then there exists a function $\eta : (0, \lambda) \to T$, which by the principle of definable choice
we may take to be definable, such that

\vspace{2mm}

\noindent 5.2.1 \hspace{1mm} for all $x \in (0, \lambda)$ and all $y \in (r, R)$, \hspace{1mm} $\theta_{\eta (x)} (y-x) < \h \theta_{\eta (x)} (y)$.

\vspace{2mm}

Pick some $\gamma \in (r, R)$ and consider the definable function $x \mapsto \theta_{\eta (x)} (\gamma)$ for $x \in (0, \lambda)$.
It follows from polynomial boundedness that there exist a positive integer $N$ and $\nu \in (0, \lambda )$ such that

\vspace{2mm}

\noindent 5.2.2 \hspace{1mm} for all $x \in (0, \nu )$, \hspace{1mm}  $\theta_{\eta (x)} (\gamma) > x^{N}$.

\vspace{2mm}

Now let $k$ be a  positive integer and set $x_0 := \frac{R-\gamma}{2k}$, so that $x_0 \in (0, \nu)$ for large enough $k$.
By applying 5.2.1 successively with $x=x_0$ and $y= \gamma + x_0, \ldots, \gamma + kx_0$ we see that
$0 < \theta_{\eta (x_0)} (\gamma) < \h \theta_{\eta (x_0)} (\gamma + x_0) < \cdots < (\h)^k \theta_{\eta (x_0 )}(\gamma + kx_0 ) < (\h)^k R$.

So by 5.2.2 we obtain $(\frac{R-\gamma}{2k})^N = x_0^N < \theta_{\eta (x_0 )}(\gamma) <  (\h)^k R$, which is the required contradiction
if $k$ is sufficiently large.
\end{proof}

\noindent {\bf Theorem 5.3}  \hspace{1mm} {\it Let $\Lambda = \{F_t : t \in T \}$ be an $(R, m, K)$-family with $R>1$.
Then there exists $M = M(\Lambda) \in \N$ such that for all $t \in T$
and all $\alpha \in \N^m$,}
 $${\it \aF \leq \text{max} \{ \aF : \alpha \in \N^m, |\alpha| \leq M \}.}$$

\begin{proof}

Since the conclusion is trivially true for those $t \in T$ such that $F_t \equiv 0$ (no matter how $M$ is chosen)
we may assume that no $F_t$ is identically zero.

For $t \in T$ define $\theta_t : (0, R) \to (0, R)$ by

\vspace{2mm}

\noindent 5.3.1 \hspace{1mm} $\theta_t (y) :=  \frac{R}{K} \text{sup} \{|F_t (z)| : z \in \Delta (y)^m \}.$

\vspace{2mm}

Now let $r := R^{\frac{3}{4}}$, $\lambda = \text{min}\{\h , R^{\frac{3}{4}} - R^{\frac{2}{3}} \}$ and apply 5.2 to obtain $\epsilon \in (0, \lambda)$ such that

\vspace{2mm}

\noindent 5.3.2 \hspace{1mm} for all $t \in T$, there exists $y_t \in (r, R)$ such that $\theta_t (y_t - \epsilon ) \geq \h \theta_t (y_t)$.

\vspace{2mm}

Now choose $D \in \N$ so that

\vspace{2mm}

\noindent 5.3.3 \hspace{1mm} $(1-\frac{\epsilon}{R} )^D < \frac{1}{4}$, and

\vspace{2mm}

\noindent 5.3.4 \hspace{1mm} $5(D+1)^m \leq 2R^{\frac{D}{3}}$.

\vspace{2mm}

We show that $M:= 2D$ satisfies the required conclusion. So fix some arbitrary $t \in T$ and let
$B_t := \text{max}\{ \aF :  |\alpha| \leq D \}$. (The $\alpha$'s range over $\N^m$ for the rest of this proof.)
It is clearly sufficient to show that

\vspace{2mm}

\noindent(*) \hspace{5mm}  for all $\alpha$ with $|\alpha| \geq 2D$ we have $\aF \leq B_t$.

\vspace{2mm}

To this end we consider the truncated Taylor expansion of $F_t$, namely $P_t (z) := \sum_{|\alpha| \leq D} \frac{F_t^{(\alpha)} (0)}{\alpha !} z^{\alpha}$.
Clearly we have

\vspace{2mm}

\noindent 5.3.5   \hspace{1mm} for all $z \in \Delta(R)^m$, \hspace{1mm} $|P_t (z)| \leq  (D+1)^m B_t R^D$.

\vspace{2mm}

Now choose $w = \langle w_1 , \ldots , w_m \rangle \in \Delta(R)^m$ with $|w_i| = y_t - \epsilon$ and $|F_t(w)| = \frac{K}{R} \theta_t (y_t - \epsilon)$ (which
is possible by 5.3.1 and the maximum modulus theorem), and let $\eta_i := \frac{w_i}{y_t - \epsilon}$ so that $|\eta_i| = 1$ for
$i = 1, \ldots , m$. Consider the function $H_t : \Delta(R) \to \C$ given by
$$H_t (u) := \frac{F_t(u\eta ) - P_t (u\eta)}{u^{D+1}}.$$
This is clearly a well-defined analytic function and by the maximum modulus theorem there exists $u_t \in \Delta (R)$ with $|u_t| = y_t$
such that $|H_t (y_t - \epsilon)| \leq |H_t (u_t )|$. Thus
$$|F_t((y_t-\epsilon)\eta) - P_t((y_t-\epsilon)\eta)| \leq (\frac{y_t-\epsilon}{y_t})^{D+1} |F_t((u_t\eta) - P_t((u_t\eta)|.$$
However, by 5.3.3, $(\frac{y_t-\epsilon}{y_t})^{D+1} \leq (1-\frac{\epsilon}{R} )^{D+1} < \frac{1}{4}$  so, upon recalling
that $w = (y_t - \epsilon)\eta$ and using 5.3.5, we see that

\vspace{2mm}

\noindent 5.3.6 \hspace{1mm} $|F_t(w)|  \leq  \frac{1}{4}|F_t(u_t \eta)| + \frac{5}{4}(D+1)^m B_t R^D$.

\vspace{2mm}

But $|F_t (w)| = \frac{K}{R} \theta_t (y_t - \epsilon) \geq \frac{K}{2R} \theta_t (y_t)$ by 5.3.2, and since $|u_t\eta| = y_t$ we obtain from
this and 5.3.1 that $|F_t (w)| \geq \h |F_t (u_t\eta)|$. Putting this into 5.3.6 we obtain

\vspace{2mm}

\noindent 5.3.7 \hspace{1mm} $|F_t(w)|  \leq  \frac{5}{2}(D+1)^m B_t R^D$.

\vspace{2mm}

Now, with a view to proving (*), let $|\alpha| \geq 2D$. Then by applying the Cauchy inequalities in the polydisk
$\Delta(y_t - \epsilon)^m$ and using 5.3.1 and 5.3.7 we obtain
$$\aF \leq \frac{K}{R} \theta_t (y_t - \epsilon) \cdot (y_t - \epsilon)^{-|\alpha|}
= |F_t (w)|(y_t - \epsilon)^{-|\alpha|} \leq \frac{5}{2}(D+1)^m B_t R^D(y_t - \epsilon)^{-|\alpha|}.$$

But $y_t - \epsilon \geq r-\lambda \geq R^{\frac{2}{3}}$ (by the definitions of $r$ and $\lambda$), so by 5.3.4

$$\aF \leq \frac{5}{2}(D+1)^mB_tR^D \cdot (R^{\frac{2}{3}})^{-2D} = \frac{5}{2}(D+1)^m R^{\frac{-D}{3}} B_t \leq B_t$$
as required.
\end{proof}

It follows immediately from 5.3 that the function $\kappa_{\Lambda} : T \to \R$ given by

\vspace{2mm}

\noindent 5.3.8   \hspace{4mm}       $\kappa_{\Lambda}(t) := \text{max} \{ \aF : \alpha \in \N^m \}$

\vspace{2mm}
\noindent is (well-defined and) definable. It also determines the topology on $\Lambda$ in the following sense.

\vspace{2mm}

\noindent {\bf Theorem 5.4}  \hspace{1mm} {\it Let $\Lambda$ be an $(R, m, K)$-family with $R>1$ as above and let $r$ be a real number satisfying $0<r<R$. Then there exists
a positive real number $B_{\Lambda} (r)$ such that for all $t \in T$ and all $z \in \Delta(r)^m$ we have
$|F_t(z)| \leq B_{\Lambda}(r) \cdot \kappa_{\Lambda} (t)$.}
\begin{proof}
Choose a real number $r_0$ such that $\text{max} \{ 1,r \} <r_0<R$ and for each $t \in T$ define $G_t : \Delta(\frac{R}{r_0})^m \to \C$
by $G_t(z) := F_t(r_0z)$. Then $\Lambda^{*} := \{G_t : t \in T \}$ is an $(\frac{R}{r_0} , m, K)$-family and since $\frac{R}{r_0} > 1$, we
may apply 5.3 to it and obtain some $M(\Lambda^{*}) \in \N$ such that
$\kappa_{\Lambda^{*}}(t) = \text{max} \{ \frac{|G_t^{(\alpha)}(0)|}{\alpha !} : \alpha \in \N^m , \hspace{1mm} |\alpha| \leq  M(\Lambda^{*} \}.$
Fix $t \in T$. Then since $G_t^{(\alpha)}(0) = r_0^{|\alpha|}F_t^{(\alpha)}(0)$ (for all $\alpha \in \N^m$) it follows that
for all $z \in \Delta(r)^m$ we have that $|F_t(z)| = |\sum_{\alpha \in \N^m} r_0^{-|\alpha|} \cdot \frac{G_t^{(\alpha)}(0)}{\alpha !} \cdot z^{\alpha}| \leq$
$\kappa_{\Lambda^{*}} (t) \cdot (\frac{r_0}{r_0 - r})^m \leq \kappa_{\Lambda} (t) \cdot r_0^{M(\Lambda^{*})} \cdot (\frac{r_0}{r_0 - r})^m$, which
gives the required result upon setting $B_{\Lambda}(r) :=  r_0^{M(\Lambda^{*})} \cdot (\frac{r_0}{r_0 - r})^m$.

\end{proof}

The topology we are referring to here is determined by the metrics $\delta_r$ ($0<r<R$) where, for any two bounded holomorphic
functions $F, G : \Delta(R)^m \to \C$, we define $\delta_r (F, G) := \text{sup} \{|F(z) - G(z)| : z \in \Delta (r)^m \}$. It turns out
that if $\Lambda$ is any $(R, m, K)$-family (regarded here as a set, rather than an indexed set, of functions)
and if $0< r, r' <R$, then the metric spaces $(\Lambda, \delta_r )$ and $(\Lambda, \delta_{r'} )$
are quasi-isometric via the identity function on $\Lambda$. (Note that this is certainly not true in general for families of
$K$-bounded holomorphic functions on $\Delta(R)^m$, e.g. consider, for $R=2, m=K=1$, the family $\{0 \} \cup \{ (\frac{z}{2})^q : q \in \N \}$.) In fact, as we now explain, they
are quasi-isometric to a bounded subset of $\C^N$ for some sufficiently large $N$ (depending only on $\Lambda$) endowed with
the metric induced by the usual sup-metric on $\C^N$: $\lVert \langle w_1 , \ldots , w_N \rangle \rVert := \text{max}\{ |w_i| : 1 \leq i \leq N \}$.

\vspace{2mm}

\noindent {\bf Definition 5.5}  \hspace{1mm} We say that an $(R, m, K)$-family $\Lambda = \{F_t : t \in T \}$ is {\it well-indexed} if,
for some $N \in \N$, $T$ is a bounded subset of $\C^N$ and
for each $r$ with $0<r<R$, there exist
positive real numbers $c_r$, $C_r$ such that for all $t, t' \in T$ we have $c_r \delta_r (F_t , F_{t'} ) \leq \lVert t-t' \rVert \leq   C_r \delta_r (F_t , F_{t'} )$.
That is, the map $t \mapsto F_t$ is a quasi-isometry from the metric space $\langle T, \lVert \cdot \rVert \rangle$ to the metric space
$\langle \Lambda , \delta_r \rangle$.

\vspace{3mm}

\noindent {\bf Theorem 5.6}  \hspace{1mm} {\it Let $\Lambda = \{F_t : t \in T \}$ be an $(R, m, K)$-family with $R>1$. Then there exists a well-indexed
$(R, m, K)$-family  $\Lambda' = \{G_t : t \in T^{*} \}$ such that $\Lambda = \Lambda'$ as sets. Further, $\text{dim}(T^{*}) \leq \text{dim}(T)$}.
\begin{proof}
Consider the $(R, m, 2K)$-family $\Omega := \{ F_t - F_{t'}: \langle t, t' \rangle \in T^2 \}$ and let $M(\Omega) \in \N$
be as given by 5.3 (with $\Omega$ in place of $\Lambda$). We take our $N = N(\Lambda)$ to be the cardinality of the set $\{ \alpha \in \N^m : |\alpha| \leq M(\Omega) \}$.

Define the map $\omega : T \to \C^N$ by $\omega(t) := \langle \frac{F_t^{(\alpha)} (0)}{\alpha !} : \alpha \in \N^m, |\alpha| \leq$

\vspace{1mm}

\noindent $M(\Omega) \rangle$ and set $T^{*} := \omega[T]$ (so obviously $\text{dim}(T^{*}) \leq \text{dim}(T)$) .  If $t, t' \in T$ and $\omega (t) =
\omega (t')$ then $\frac{F_t^{(\alpha)} (0)}{\alpha !} =  \frac{F_{t'}^{(\alpha)} (0)}{\alpha !}$ holds for all $\alpha$ with $|\alpha | \leq M(\Omega)$ and hence,
by 5.3 (and the linearity of the derivatives), it holds for  all $\alpha \in \N^m$. Thus $F_t = F_{t'}$.  This  does not necessarily imply
that $t=t'$ but, by the
principle of definable choice, we may  choose a definable right inverse $\omega^{-1}:T^{*} \to T$ of $\omega$ and, setting  $G_t = F_{\omega^{-1}(t)}$ (for $t \in T^{*}$),
we have that, as a set,  $\Lambda =  \{ G_{t} : t \in T^{*} \}$. We complete the proof by showing that the $(R,m,K)$-family $\{ G_{t} : t \in T^{*} \}$ is  well-indexed.

Firstly,  by 5.1.1 we have that $T^{*} \subseteq \overline{\Delta(K)}^N$, so $T^{*}$ is a bounded subset of $\C^N$. For the quasi-isometric inequalities
consider some  $r \in (0, R)$.

We may take $C_r = \text{max} \{ 1, r^{-M(\Omega)} \}$. Indeed, suppose $t, t' \in T^{*}$. Let $s = \omega^{-1}(t)$
and $s' = \omega^{-1}(t')$. Then by the Cauchy inequalities
applied to the function $(F_{s} - F_{s'})$ restricted to the disk $\Delta(r)^m$, we have that for all $\alpha \in \N^m$, $\frac{|(F_{s} - F_{s'})^{\alpha}(0)|}{\alpha !}
\leq  \frac{\delta_r(F_s , F_{s'})}{r^{|\alpha|}}$. In particular, $\lVert t-t' \rVert = \lVert \omega(s) - \omega(s') \rVert \leq C_r \delta_r (F_s , F_{s'}) =
C_r \delta_r (G_ {t}, G_{t'})$.

Finally, we take $c_r$ to be $B_{\Omega}(r)^{-1}$, where $B_{\Omega}(r)$ is as in 5.4 (with $\Omega$ in place of $\Lambda$).
Then, with $t,t',s,s'$ as above, and $z \in \Delta(r)^m$ we have by 5.3 and 5.4, $|(F_s - F_{s'})(z)| \leq B_{\Omega}(r)
\cdot \kappa_{\Omega}(\langle s, s' \rangle) = B_{\Omega}(r) \cdot \lVert \omega(s) - \omega(s') \rVert =
B_{\Omega}(r) \cdot \lVert t - t' \rVert$. So  $\lVert t - t' \rVert \geq c_r \delta_r (G_ {t}, G_{t'})$, as required.

\end{proof}

This result suggests a natural way of compactifying definable $(R, m, K)$-families. For let $\Lambda = \{ F_t : t \in T \}$
be such a family with $R>1$ and assume, as now we may, that it is well-indexed (with $T$  a bounded subset of $\C^N$, say). We wish
to extend $\Lambda$ to a family $\overline{\Lambda}$ well-indexed by the closure $\overline{T}$ of $T$ in $\C^N$. So for $t \in \overline{T}$
choose a Cauchy sequence $\langle t^{(i)} : i \in \N \rangle$ in $T$ converging to $t$   (in the space $\langle \C^N , \lVert \cdot \lVert \rangle$).
Then by the quasi-isometric property of the indexing it follows that $\langle F_{t^{(i)}} : i \in \N \rangle$ is a Cauchy sequence in
$\langle \Lambda , \delta_r \rangle$ for every $r \in (0, R)$. So by Weierstrass' theorem on uniformly convergent sequences, there exists a holomorphic function
$F_t : \Delta(R)^m \to \C$ such that, for each $r \in (0, R)$, $\delta_r (F_{t^{(i)}}, F_t) \to 0$ as $i \to \infty$. It is easy to
check that $F_t$ depends only on $t$ (and not on the particular choice of Cauchy sequence) and that our notation is consistent if $t$
happens to lie in $T$. We have the following
\vspace{2mm}

\noindent {\bf Theorem 5.7}  \hspace{1mm} {\it The collection $\overline{\Lambda} := \{ F_t : t \in \overline{T} \}$
 as defined above is a well-indexed $(R, m, K)$-family.}

\begin{proof}
 Everything follows from elementary facts on convergence (and we may take the same constants $c_r$, $C_r$ for the quasi-isometric
 inequalities) apart from the definability of $\overline{\Lambda}$. To see that this holds too, let
 $$\text{graph}(\Lambda) := \{ \langle t, z, w \rangle \in \C^{N+m+1} : t \in T, z \in \Delta(R)^m , F_t (z) = w \}.$$
 Then $\text{graph}(\Lambda)$ is a definable subset of $\C^{N+m+1}$ (this being the definition of what it means for $\Lambda$
 to be a definable family). We complete the proof by showing that

 \vspace{2mm}

 \noindent(*) for all $\langle t, z, w \rangle \in \C^{N+m+1}$, \hspace{2mm}  $t \in \overline{T}, z \in \Delta(R)^m$ and $F_t (z) = w$
 if and only if $z \in \Delta(R)^m$ and $\langle t, z, w \rangle \in \overline{\text{graph}(\Lambda)}$.

 \vspace{2mm}

 So let $\langle t, z, w \rangle \in \C^{N+m+1}$.

 Suppose first that $t \in \overline{T}, z \in \Delta(R)^m$ and $F_t (z) = w$.
Choose a sequence $\langle t^{(i)} : i \in \N \rangle$ in $T$ converging to $t$. Choose $r$ so that $|z|<r<R$. Then by the construction
of $F_t$ we have that $\delta_r (F_{t^{(i)}}, F_t) \to 0$ as $i \to \infty$. In particular, $|F_{t^{(i)}}(z) - F_t(z)| \to 0$ as $i \to \infty$,
i.e. $F_{t^{(i)}}(z) \to w$ as $i \to \infty$. So $\langle t^{(i)} , z , F_{t^{(i)}}(z) \rangle \to \langle t, z, w \rangle$ as $i \to \infty$.
Since   $\langle t^{(i)} , z , F_{t^{(i)}}(z) \rangle \in \text{graph}(\Lambda)$ for each $i \in \N$, it follows
that $\langle t, z, w \rangle \in \overline{\text{graph}(\Lambda)}$ as required.

For the converse, suppose that $z \in \Delta(R)$ and that $\langle t, z, w \rangle \in \overline{\text{graph}(\Lambda)}$. Then
certainly $t \in \overline{T}$ and we must show that $F_t(z) = w$, thereby completing the proof of (*).

Let $\langle \langle t^{(i)}, z^{(i)} , w_i \rangle : i \in \N \rangle$ be a sequence in $\text{graph}(\Lambda)$ converging to
$\langle t, z, w \rangle$. Then $z^{(i)} \to z$ as $i \to \infty$ and since $z \in \Delta(R)$, we may choose $r<R$ so that
 $z \in \Delta(r)^m$ and $z^{(i)} \in \Delta(r)^m$ for each $i \in \N$. Since $t^{(i)} \to t$ as $i \to \infty$, it follows from
 the construction of $F_t$ that $\delta_r (F_{t^{(i)}}, F_t) \to 0$ as $i \to \infty$. In particular, $|F_{t^{(i)}}(z^{(i)}) - F_t(z^{(i)})| \to 0$ as $i \to \infty$.
But by the definition of $\text{graph}(\Lambda)$, $F_{t^{(i)}}(z^{(i)}) = w_i$ for all $i \in \N$ and hence $|w_i - F_t(z^{(i)})| \to 0$ as $i \to \infty$.
However, $F_t(z^{(i)}) \to F_t(z)$ as $i \to \infty$ (because $F_t$ is certainly continuous on $\Delta(r)^m$) and hence $w_i \to F_t(z)$ as $i \to \infty$.
Since $w_i \to w$ as $i \to \infty$ it now follows that $w = F_t(z)$ as required.
\end{proof}

Having shown how to compactify $(R,m,K)$-families, we now projectivize them.

\vspace{2mm}

\noindent {\bf Theorem 5.8}  \hspace{1mm} {\it Let $\Lambda = \{ F_t : t \in T \}$ be an $(R,m,K)$-family with $R>1$. Assume that for no $t \in T$ does
$F_t$ vanish identically. Let $R_0$ satisfy $1<R_0 < R$. Then there exists a positive real number $K_0$ and an  $(R_0, m, K_0 )$-family
$\Lambda^{\dag} = \{ G_t : t\in T^{\dag} \}$ such that

\vspace{1mm}

\noindent5.8.1 \hspace{2mm} $\Lambda^{\dag}$ is well-indexed  and $T^{\dag}$ is closed in its ambient space $\C^N$;

\vspace{1mm}

\noindent5.8.2 \hspace{2mm} for every $t \in T$, there exists $A_t > 0$ and $t^{\dag} \in T^{\dag}$ such that \newline
$G_{t^{\dag}} = A_t \cdot F_t \upharpoonright \Delta (R_0 )^m$;

\vspace{1mm}

\noindent5.8.3 \hspace{2mm} the (real) dimension of $T^{\dag}$ is at most that of $T$;

\vspace{1mm}

\noindent5.8.4 \hspace{2mm} for no $t \in T^{\dag}$ is $G_t$ identically zero.}

\begin{proof}
We consider the $(R_0 , m , K_0)$-family $\{ \frac{F_t}{\kappa_{\Lambda}(t)} \upharpoonright \Delta (R_0)^m : t \in T \}$
(cf. 5.3.8), where $K_0 = B_{\Lambda} (R_0)$ (cf. 5.4). Using 5.6, let $\Lambda^{*} = \{ G_t : t \in T^{*} \}$ be a well-indexing
of it. Then $\text{dim}(T^{*}) \leq \text{dim}(T)$. We set $T^{\dag} := \overline{T^{*}}$ and $\Lambda^{\dag} := \overline{\Lambda^{*}}$
as in 5.7. Then 5.8.1-3 are clear. For 5.8.4, let us first note that if $t \in T^{*}$ then for some $s \in T$, $G_t = \frac{F_s}{\kappa_{\Lambda}(s)} \upharpoonright \Delta (R_0)^m$
and hence there exists $\alpha \in \N^m$ with $|\alpha| \leq M(\Lambda)$  such that $\frac{|G_t^{(\alpha)}(0)|}{\alpha !} = 1$ (see 5.3 and 5.3.8).
Now let $t^{\dag} \in \overline{T^{*}}$. We must show that $G_{t^{\dag}}$ does not vanish identically. For this, choose $t \in T^{*}$ such that $\rVert t-t^{\dag}\lVert < \frac{c_1}{2}$
so that $\delta_1 (G_t ,G_{t^{\dag}} ) < \frac{1}{2}$ (by 5.5 with $r=1$). It now follows from the Cauchy inequalities applied to the function
$G_t - G_{t^{\dag}}$ restricted to the unit polydisk $\Delta(1)^m$, that for all $\alpha \in \N^m$ we have $\frac{|G_t^{(\alpha)}(0)-G_{t^{\dag}}^{(\alpha)}(0)|}{\alpha !} < \frac{1}{2}$.
So choosing $\alpha$ with $\frac{|G_t^{(\alpha)}(0)|}{\alpha !} = 1$ as above, we see that $\frac{|G_{t^{\dag}}^{(\alpha)}(0)|}{\alpha !} > \frac{1}{2}$. In particular,
$G_{t^{\dag}}$ does not vanish identically.

\end{proof}

\noindent {\bf Remark}  \hspace{1mm}
The hypothesis that $R_0 < R$ is necessary here:  the reader may easily verify that for
the $(2,1,1)$-family $\Lambda = \{ g_t : t \in [0, \frac{1}{2}) \}$ where $g_t(z) = \frac{1-2t}{1-tz}$ , and for each
given $K_0 > 0$, there
is no $(2, 1, K_0)$-family $\Lambda^{\dag}$ satisfying 5.8.1-4.

\vspace{3mm}

We are almost ready for the proof of the quasi-parameterization theorem (2.2.3). This will proceed by induction on the dimension of the
given family $\{ X_t : t \in T \}$, i.e. the (minimum, real) dimension of the indexing set $T$. The inductive step will involve
a use of the Weierstrass Preparation Theorem (or, rather, a modification of the argument used in the complex analytic proof of the Weierstrass
Preparation Theorem) and, as usual, one first has to make a transformation so that the function being prepared is regular in one of its
variables. Further, in our case the transformation will have to work uniformly for all members of a certain definable family of functions and for all values of the other variables.
Unfortunately, the usual linear change of variables does not have this property. Instead we use a variation of the transformation
used by Denef and van den Dries in their proof of quantifier elimination for the structure $\R_{an}$ (see \cite{DvdD}). The result we require
is contained in the following

\vspace{2mm}

\noindent {\bf Theorem 5.9}  \hspace{1mm} {\it Let $\Lambda = \{ F_t : t \in T \}$ be an $(R,m,K)$-family with  $R>1$ (and, for non-triviality, with $m \geq 2$)
such that for no $t \in T$ does $F_t$ vanish identically. Let $R'$ and $R''$ be real numbers
satisfying $1<R''<R'<R$. Then there exist positive integers $D_1,\ldots,D_{m-1}$ and a positive real number $\eta$ such that the
bijection} $${\it \theta : \C^{m} \to \C^{m} : z = \langle z_1,\ldots,z_m \rangle \mapsto \langle z_1 + \eta z_m^{D_1},\ldots,z_{m-1} + \eta z_m^{D_{m-1}}, z_m \rangle}$$
{\it satisfies

\vspace{2mm}

\noindent 5.9.1 \hspace{2mm} $\theta [ \overline{\Delta(R')}^m ] \subseteq \Delta(R)^m$,

\vspace{2mm}

\noindent 5.9.2 \hspace{2mm} $\theta^{-1}[\overline{\Delta(1)}^m] \subseteq \Delta(R'')^m$, and

\vspace{2mm}

\noindent 5.9.3 \hspace{2mm} for each $t \in T$ and $z' \in \Delta(R')^{m-1}$ the function $z_m \mapsto F_t \circ \theta (z',z_m)$ (for
$z_m \in \Delta(R')$) does not vanish identically in $z_m$.}

\vspace{3mm}
This will follow from the following general

\vspace{2mm}

\noindent {\bf Lemma 5.10} \hspace{1mm} {\it Let $m \geq 1$ and suppose that $\mathcal{X} = \{ X_t : t \in T \}$ is a definable family of subsets of $\R^m$ such that
for all $t \in T$, $\text{dim}(X_t) < m$. Then there exist positive integers $D_1,\ldots,D_{m-1}$ such that for all
$t \in T$, all $\eta > 0$ and all $w_1,\ldots, w_{m-1} \in \R$, there exists $\epsilon = \epsilon (t, \eta, w_1,\ldots, w_{m-1} ) >0$
such that} $${\it X_t \cap \{ \langle w_1 + \eta x^{D_1},\ldots,w_{m-1} + \eta x^{D_{m-1}}, x \rangle \in \R^m : 0<x< \epsilon \} = \emptyset.}$$
\begin{proof}
Induction on $m$. For $m=1$, each $X_t$ is (uniformly) finite. So obviously we can find, for each $t \in T$, an $\epsilon = \epsilon (t) > 0$
such that $X_t \cap (0, \epsilon) = \emptyset$, which is the required conclusion in this case.

Now assume that the lemma holds for some $m \geq 1$ and that $\{ X_t : t \in T \}$ is a definable family of subsets of $\R^{m+1}$ each having
dimension at most $m$.

For $t \in T$ define $S_t := \{ s \in \R^m : \{ y \in \R : \langle y, s \rangle \in X_t \} \hspace{1mm} \text{is infinite} \}$. Then $\{ S_t : t \in T \}$
is a definable family of subsets of $\R^m$ and clearly $\text{dim}(S_t )<m$ for each $t \in T$. So we may apply the inductive hypothesis
to this family and obtain (with a small shift in notation) positive integers $D_2 , \ldots,D_m$ such that for all $t \in T$, all
$\eta >0$ and all $w_2,\ldots,w_m \in \R$, there exists $\epsilon = \epsilon (t, \eta, w_2,\ldots, w_{m} ) >0$ such that for all
$x \in (0, \epsilon)$, we have that $\langle w_2 + \eta x^{D_2},\ldots,w_{m} + \eta x^{D_{m}}, x \rangle \notin S_t$, i.e. there
are at most finitely many $y \in \R$ such that $\langle y, w_2 + \eta x^{D_2},\ldots,w_{m} + \eta x^{D_{m}}, x \rangle \in X_t$.

Now, by the principle of definable choice, there exists a definable function $H: T \times (0, \infty) \times \R^m \times \R \to (0, 1]$
such that for all $t \in T$, all $\eta \in \R$, all $w \in \R^m$ and all $x \in \R$, its value $H(t, \eta , w, x)$ is
some $y \in (0,1)$ such that for no $u \in (0,y)$ do we have
 $\langle w_1 + \eta u, w_{2} + \eta x^{D_{2}},\ldots, w_{m} + \eta x^{D_{m}}, x \rangle \in X_t$, if such a $y$ exists (and is, say, $1$ otherwise).
Notice that by the discussion above, such a $y$ does indeed exist whenever $x \in (0, \epsilon (t, \eta, w_2,\ldots,w_m))$.

We now apply polynomial boundedness to obtain a positive integer $D_1$ such that for all $t \in T$, all $\eta > 0$ and all $w = \langle w_1 ,\ldots, w_m \rangle \in \R^m$, there
exists $\gamma = \gamma (t, \eta , w) > 0$, which we may assume is strictly less than  $\epsilon (t, \eta, w_2,\ldots,w_m)$,
such that for all $x \in (0 , \gamma )$, we have $H(t, \eta , w, x) > x^{D_1}$. So if $x \in (0, \gamma )$, then $x \in (0, \epsilon )$ and hence
for all $u \in (0, H(t, \eta , w, x))$ we have that $\langle w_1 + \eta u, w_{2} + \eta x^{D_{2}}, \ldots, w_{m} + \eta x^{D_{m}}, x \rangle
\notin X_t$. But $x^{D_1} \in (0, H(t, \eta , w, x) )$ so $\langle w_1 + \eta x^{D_1}, w_{2} + \eta x^{D_{2}}, \ldots, w_{m} + \eta x^{D_{m}}, x \rangle
\notin X_t$. Since this holds for arbitrary $x \in (0, \gamma )$, we are done (upon taking  $\epsilon (t, \eta, w_1,\ldots,w_m) := \gamma (t, \eta, w)$).

\end{proof}

\noindent
{\it Proof of 5.9}

For $t \in T$ and $u \in (-R, R)^m$, let
$X_{\langle t, u \rangle} := \{ w \in (-R', R')^m : w+ u\text{i} \in \Delta (R)^m \hspace{2mm}\text{and} \hspace{2mm} F_t(w+  u\text{i}) = 0 \}$.
Then $\text{dim}(X_{\langle t, u \rangle}) < m$ because if  $U$ is some non-empty, open subset of $(-R', R')^m$ such that
$w+ u\text{i} \in \Delta (R)^m \hspace{2mm}\text{and} \hspace{2mm} F_t(w+  u\text{i}) = 0$ for all $w \in U$,
then $F_t$, being holomorphic, would vanish
identically on $\Delta(R)^m$ which is contrary to hypothesis. So we may apply 5.10 to the family \newline
$\mathcal{X} := \{ X_{\langle t, u \rangle} : \langle t, u \rangle \in T \times (-R, R)^m \}$ and obtain
positive integers $D_1, \ldots,D_{m-1}$ with the property stated in the conclusion of 5.10. Now choose $\eta$ so small
that the resulting map $\theta$ satisfies 5.9.1 and 5.9.2. (The inverse of $\theta$ is given by
$\theta^{-1} (z_1, \ldots,z_m) = \langle z_1 - \eta z_m^{D_1},  \ldots, z_{m-1} - \eta z_{m}^{D_{m-1}}, z_m \rangle$.)

To verify 5.9.3, let $t \in T$ and let $z' = \langle z_1,\ldots,z_{m-1} \rangle \in \Delta (R')^{m-1}$.
 If $F_t \circ \theta (z' , z_m ) = 0$ for all
$z_m \in \Delta(R')$ then, in particular, $\langle z_1 + \eta x^{D_1},\ldots,z_{m-1} + \eta x^{D_{m-1}}, x \rangle \in
\Delta(R')^m$
 and $F_t (z_1 + \eta x^{D_1},\ldots,z_{m-1} + \eta x^{D_{m-1}}, x ) = 0$
for all sufficiently small positive $x \in \R$. However, if the real and imaginary parts of $z_i$ are, respectively,
$a_i$ and $b_i$ (for $i = 1,\ldots,m-1$),   this implies that
$\langle a_1 + \eta x^{D_1},\ldots,a_{m-1} + \eta x^{D_{m-1}}, x \rangle \in X_{\langle t, b \rangle}$ for all sufficiently
small $x>0$, where $b:= \langle b_1,\ldots,b_{m-1} , 0 \rangle$. But this clearly contradicts the conclusion of 5.10.

\vspace{5mm}

We now come to the proof of the quasi-parameterization theorem (2.2.3), so definability is now, and henceforth, with
respect to a structure as described in 2.2.1.

Recall that we are given a definable family
$\mathcal{X} = \{ X_s : s \in S \}$ of subsets of $[-1, 1 ]^n$ each of dimension at most $m$, where $m<n$.
We assume that the indexing set $S$ has been chosen of minimal dimension and we denote this dimension by $\text{indim}(\mathcal{X})$.
We are required to find some $R>1, K>0$, a positive integer $d$, and an
$(R, m+1, K)$-family $\Lambda^{*}$, each element of which is a monic polynomial of degree at most $d$ in its first variable,
such that

\vspace{2mm}

\noindent
(*) \hspace{2mm} for all $s \in S$, there exists $F \in \Lambda^{*}$ such that
$X_s \subseteq \{ x = \langle x_1,\ldots,x_n \rangle \in [-1, 1]^n : \exists w \in [-1, 1]^m \bigwedge_{i=1}^n F(x_i ,w ) =0 \}.$

\vspace{4mm}

Let us first consider the case $\text{indim}(\mathcal{X})=0$, i.e. the case that $\mathcal{X}$ is finite. In fact, it is sufficient to consider
the case that
$\mathcal{X}$ consists of a single set, $X$ say, where $X \subseteq [-1,1]^n$ and $\text{dim}(X) \leq m<n$.

\vspace{1mm}

\noindent
{\bf Remark 5.11} \hspace{1mm} Indeed, it is obvious that, in general, if the conclusion of the quasi-parameterization theorem holds for the families $\{ X_s : s \in S_1 \}$
and $\{ X_s : s \in S_2 \}$, then it also holds for the family $\{ X_s : s \in S_1 \cup S_2 \}$.

\vspace{1mm}

Since we are now assuming that our ambient
o-minimal structure is a reduct of $\R_{an}$, we may apply the $0$-mild parameterization theorem (Proposition 1.5 of \cite{JMT})
which tells us that (after routine translation and scaling) there exists a finite set $\{ \Phi_j : 1 \leq j \leq l \}$ of
definable, real analytic maps $\Phi_j = \langle \phi_{j,1},\ldots,\phi_{j,n} \rangle :(-3,3)^m \to \R^n$ (say) whose images
on $[-1,1]^m$ cover $X$, and are such that $\frac{|\phi^{(\alpha)}_{j,i} (0)|}{\alpha !} \leq c\cdot3^{-|\alpha|}$ for some constant $c$, and all
$\alpha \in \N^m$, $j = 1,\ldots,l$ and $i=1,\ldots,n$. If we now invoke our other assumption on the ambient 0-minimal structure,
then (again, after translation and scaling at the expense of increasing the number of parameterizing functions)
there is no harm in assuming that each function $\phi_{j.i}$ has a definable, complex extension
(for which we use the same notation) to the polydisk $\Delta(2)^m$.

We now set
$$F(x, w) := \prod_{j=1}^{l}\prod_{i=1}^{n}(x - \phi_{j,i}(w))$$
for $x \in \Delta(2)$, $w \in \Delta(2)^m$.

\vspace{3mm}

Then $F$ is a monic polynomial of degree $d=ln$ in its first variable. Further, $\{ F \}$ is, for some $K>0$, a $(2,m+1,K)$-family which
clearly has the required property (*).

\vspace{3mm}

We now proceed by induction on $\text{indim}(\mathcal{X})$. So consider some $k,m,n \in \N$ with $k = \text{indim}(\mathcal{X})\geq 1$ and $m<n$, and a definable family
$\mathcal{X} = \{X_s : s \in S \}$ of subsets of $[-1,1]^n$ with $\text{dim}(X_s) \leq m$ for each $s \in S$, and assume
that the theorem holds for families of indim $< k$ (for arbitrary $m,n$). Now it is easy to show that  we may represent $\mathcal{X}$ in the form
$\{ X_u : u \in [-1, 1 ]^k \}$.

In order to apply the inductive hypothesis we define the family $\mathcal{Y} := \{ Y_{u'} : u' \in [-1,1]^{k-1} \}$ of subsets of $[-1,1]^{n+1}$ where,
for each $u' \in [-1,1]^{k-1}$,

\vspace{2mm}

\noindent
5.12 \hspace{3mm} $Y_{u'} := \{ \langle x, u_{k} \rangle \in [-1,1]^{n+1} : x \in X_u \}.$

\vspace{2mm}

(In the course of this proof we shall use the convention that if $v$ is a tuple whose length, $p$ say, is clear from the context, then
$v = \langle v_1,\ldots,v_p \rangle$, and $v' = \langle v_1,\ldots,v_{p-1} \rangle$. Also, by convention, $[-1, 1]^0 := \{0\}$.)

Clearly $\text{indim}(\mathcal{Y}) < k$ and, for each $u' \in  [-1,1]^{k-1}$, $\text{dim}(Y_{u'}) \leq m+1 < n+1$, so we may indeed apply
our inductive hypothesis to $\mathcal{Y}$ and obtain some $R>1$, $K>0$,  an $(R,m+2,K)$-family $\Lambda = \{ H_t : t \in T \}$, and
a positive integer $d$ such that

\vspace{2mm}

\noindent
5.13 \hspace{2mm} each $H_t$ is a monic polynomial of degree at most $d$ in its first variable, and

\vspace{2mm}

\noindent
5.14 \hspace{2mm} for each $u' \in [-1,1]^{k-1}$ there exists $t = t(u') \in T$ such that \newline
$Y_{u'} \subseteq \{ \langle x, x_{n+1} \rangle  \in [-1, 1]^{n+1} :
\exists w \in [-1, 1]^{m+1} (\bigwedge_{i=1}^{n+1} H_t (x_i ,w ) =0 \}.$

\vspace{2mm}

In order to prepare the functions in $\Lambda$ as discussed above, we must first remove those $t$ from $T$
such that for some $z_1$ the function $H_t(z_1, \cdot)$ vanishes identically (in its last $m+1$ variables).
To do this, we first note that, by the principle of definable choice, the correspondence $u' \mapsto t(u')$
(for $u' \in [-1,1]^{k-1}$) may be taken to be a definable function and so the set \newline $E := \{ u \in [-1, 1]^{k}
: H_{t(u')} (u_{k}, {\bf 0}) = 0 \}$ is definable (where ${\bf0}$ is the origin of $\R^{m+1}$). We have
$\text{dim}(E) < k$ because if $E$ contained a non-empty open subset of $[-1,1]^{k}$, then we could find
some $u' \in [-1,1]^{k-1}$ such that $H_{t(u')} (u_{k}, {\bf 0}) = 0$ for all $u_{k}$ lying in some non-empty
open interval, which is impossible as $H_{t(u')} (\cdot, {\bf 0})$ is a monic polynomial.

Thus, by another use of the inductive hypothesis, the family $\{ X_u : u \in E \}$ satisfies the conclusion
of the quasi-parameterization theorem and so, by 5.11, it is sufficient to consider the family
$\{ X_u : u \in [-1,1]^k \setminus E \}$.

For this we define, for each $u \in [-1,1]^k \setminus E$, the function $H^{*}_u : \Delta (R)^{m+1} \to \C$
by

\vspace{2mm}

\noindent
5.15 \hspace{2mm} $H^{*}_u (z) := H_{t(u')} (u_k , z)$,

\vspace{2mm}

\noindent
so that for all $u \in [-1,1]^k \setminus E $, $H^{*}_u({\bf 0}) \neq 0$. Now set

\vspace{2mm}

\noindent
5.16 \hspace{2mm} $\Lambda_0 := \{ H_u^{*} :u \in [-1, 1]^k \setminus E \}$.

\vspace{2mm}

Then $\Lambda_0$ is an $(R, m+1, K)$-family which does not contain the zero function. So we may apply 5.8 to it
with, say, $R_0 = \frac{1+R}{2}$ (and $m+1$ in place of $m$) and obtain, for some $K_0> 0$, an $(R_0, m+1, K_0)$-family
$\Lambda_0^{\dag} = \{ G_t : t \in T_0^{\dag} \}$  having properties 5.8.1-4.

I claim that

\vspace{2mm}

\noindent
5.17 \hspace{2mm} for all $u \in [-1,1]^k \setminus E$, there exists $t^{\dag} \in T_0^{\dag}$ such that

\vspace{1mm}

\noindent
$X_u \subseteq \{ x \in [-1,1]^n : \exists w \in [-1,1]^{m+1} ( \bigwedge_{i=1}^n H_{t(u')} (x_i , w)=0 \wedge G_{t^{\dag}}(w)=0) \}$.

\vspace{2mm}

Indeed, let $u \in [-1,1]^k \setminus E$.  By 5.8.2, there is
some $t^{\dag} \in T_0^{\dag}$ and $A>0$ such that for all $z \in \Delta(R_0)^{m+1}$,

\vspace{2mm}

\noindent
5.17.1 \hspace{2mm} $G_{t^{\dag}}(z) = A \cdot H^{*}_u (z)$.

\vspace{2mm}

Now let $x \in X_u$. Then, by 5.12, $\langle x, u_k \rangle \in Y_{u'}$. Hence, by 5.14, we may choose
$w \in [-1,1]^{m+1}$ such that $H_{t(u')}(x_i ,w) =0$ for $i = 1, \ldots, n$ and $H_{t(u')}(u_k ,w) = 0$.
Since $[-1,1]^{m+1} \subseteq \Delta(R_0)^{m+1}$, 5.17 now follows from 5.17.1 and 5.15.

\vspace{3mm}

In order to complete the proof we must reduce the range of the $w$-variable in 5.17 from $[-1,1]^{m+1}$ to
$[-1,1]^{m}$. The idea is simple: we use the relation $G_{t^{\dag}}(w)=0$ to express $w_{m+1}$ as a function of  $w_1, \ldots,w_m$,
and then substitute this function for $w_{m+1}$ in the first conjunct appearing in 5.17. Of course, there are some technical difficulties to
be overcome. Firstly, we must ensure that $G_{t^{\dag}}(w)$ really does depend on $w_{m+1}$ and this is achieved
by the transformation described in 5.9. Secondly, the argument only works locally. However, the compactness of $T_0^{\dag}$ will guarantee
that this is sufficient. And finally, the functional dependence of $w_{m+1}$ on $w_1,\ldots,w_m$ will, in general, be a many-valued one.
This is precisely why we only obtain {\it quasi}-parameterization rather than parameterization.

\vspace{1mm}

So, to carry out the first step, we apply 5.9 to the $(R_0,m+1,K_0)$-family $\Lambda_0^{\dag} = \{ G_t : t \in T_0^{\dag} \}$ (which
is permissible as it satisfies 5.8.4) with $R' = 1+\frac{2(R_0 -1)}{3}$ and $R'' = 1+\frac{(R_0 -1)}{3}$ (and $m+1$ in place of $m$).
Let $\theta : \C^{m+1} \to \C^{m+1}$ be as in 5.9 and, for each $t \in T_0^{\dag}$ set

\vspace{2mm}

\noindent
5.18 \hspace{2mm} $\tilde{G_t} := G_t \circ \theta \upharpoonright \Delta (R')^{m+1}$.

\vspace{2mm}

Then $\{ \tilde{G_t} : t \in T_0^{\dag} \}$ is an $(R',m+1, K_0)$-family. Further, since the family $\Lambda_0^{\dag}$
is well indexed (5.8.1), it immediately follows (from 5.5 and the Cauchy inequalities) that for each $\alpha \in \N^{m+1}$,
the function $G^{(\alpha)}_t(z)$ is continuous in both $t$ and $z$,
for $\langle t, z \rangle \in T_0^{\dag} \times \Delta(R_0)^{m+1}$ . Since $\theta$ is holomorphic throughout $\C^{m+1}$
we obtain

\vspace{2mm}

\noindent
5.19 \hspace{2mm} for each $\alpha \in \N^{m+1}$, the function $\tilde{G}^{(\alpha)}_t(z)$ is continuous in both $t$ and $z$ for
$\langle t, z \rangle \in T_0^{\dag} \times \Delta(R')^{m+1}$.

\vspace{2mm}

Also, it follows from 5.9.3 that

\vspace{2mm}

\noindent
5.20 \hspace{2mm} for all $t \in T_0^{\dag}$ and all $z' \in \Delta(R')^m$, the function $\tilde{G_t} (z', \cdot)$
does not vanish identically on $\Delta (R')$.

\vspace{2mm}

Having modified the functions $G_t$, we must now adjust the functions $H_{t(u')}$ in order to preserve 5.17. Accordingly, we define,
for each $u \in [-1,1]^{k} \setminus E$, the function $\tilde{H}_{u} : \Delta(R')^{m+2} \to \C$ (which, in fact, only
depends on $u'$) by

\vspace{2mm}

\noindent
5.21 \hspace{2mm} $\tilde{H}_{u} (z_1,z_2,\ldots,z_{m+2}) := H_{t(u')} (z_1, \theta (z_2,\ldots,z_{m+2})).$

\vspace{2mm}

Then $\{ \tilde{H}_{u} : u \in [-1,1]^k \setminus E \}$ is an $(R', m+2, K)$-family and, as we show below,  the following
version of 5.17 holds.

\vspace{2mm}

\noindent
5.22 \hspace{2mm} for all $u \in [-1,1]^k \setminus E$, there exists $t^{\dag} \in T_0^{\dag}$ such that

\vspace{1mm}

\noindent
$X_u \subseteq \{ x \in [-1,1]^n : \exists v \in (-R'',R'')^{m+1} ( \bigwedge_{i=1}^n \tilde{H}_{u} (x_i , v)=0 \wedge \tilde{G}_{t^{\dag}}(v)=0) \}$.

\vspace{2mm}

Indeed, let $u = \langle u', u_{k} \rangle \in [-1,1]^k \setminus E$ and choose $t^{\dag} \in T_0^{\dag}$ as in 5.17. Suppose $x \in X_u$
and (by 5.17) choose $w \in [-1,1]^{m+1}$ such that  $\bigwedge_{i=1}^n H_{t(u')} (x_i , w)=0 \wedge G_{t^{\dag}}(w)=0$. Let $v = \theta^{-1}(w)$.
Then by 5.9.2, $v \in \Delta(R'')^{m+1}$. But all the coordinates of $v$ are real, so $v \in (-R'',R'')^{m+1}$. Also, for $i=1,\ldots,n$ we have,
by 5.21, that $\tilde{H}_{u} (x_i , v) = H_{t(u')} (x_i , \theta(v)) =  H_{t(u')} (x_i , w) = 0$. Similarly, by 5.18,
$\tilde{G}_{t^{\dag}} (v) = G_{t^{\dag}} (w) =0$ and 5.22 follows.

Let us also record here the fact that in view of 5.13, and since the transformation 5.21 does not affect the variable $z_1$, we have

\vspace{2mm}

\noindent
5.23 \hspace{2mm} for each $u \in [-1,1]^k \setminus E$, the function $\tilde{H}_u$ is a monic polynomial of degree at most $d$ in its first variable.

\vspace{3mm}

We now carry out the local argument, as sketched above, that expresses $z_{m+1}$ as a many-valued function of $z' = \langle z_1,\ldots,z_m \rangle$ via
the relation  $\tilde{G}_{t^{\dag}}(z', z_{m+1}) = 0$.

Firstly, fix some $R_1$ with $R'' < R_1 < R'$ and for each $r$ with $R''<r<R_1$ let $C_r$ be the circle in $\C$ with centre $0$ and
radius $r$. Consider the set

\vspace{2mm}

\noindent
5.24  \hspace{2mm} $V_r := \{ \langle t, z' \rangle \in T_0^{\dag} \times \overline{\Delta(R_1)}^m : \hspace{1mm} \text{for all} \hspace{2mm}
z_{m+1} \in C_r , \hspace{2mm}  \tilde{G}_t(z', z_{m+1}) \neq 0 \}$.

\vspace{2mm}

It follows from 5.19 that $V_r$ is an open subset of $T_0^{\dag} \times \overline{\Delta(R_1)}^m$ (for the $\lVert \cdot \rVert$-metric
inherited from $\C^{N+m}$, where $N$ is as in 5.8.1). Further, it follows easily from 5.20 that the collection $\{ V_r : R''<r<R_1 \}$
covers the compact space $T_0^{\dag} \times \overline{\Delta(R_1)}^m$.

Now, by the Lebesgue Covering Lemma, there exists $\epsilon > 0$, a positive integer $M$, and points $t^{(1)},\ldots,t^{(M)} \in T_0^{\dag}$,
$a^{(1)},\ldots,a^{(M)} \in [-R'', R'']^m$ such that

\vspace{2mm}

\noindent
5.25 \hspace{2mm} the collection $\{ t^{(h)} + \Delta(\epsilon)^N : h=1,\ldots, M \}$ covers $T_0^{\dag}$,

\vspace{2mm}

\noindent
5.26 \hspace{2mm} each set $a^{(j)} + \Delta(2\epsilon)^m$ is contained in $\Delta(R_1)^m$
and the collection $\{ a^{(j)} + (-\epsilon, \epsilon )^m : j=1,\ldots, M \}$ covers $[-R'',R'']^m$, and

\vspace{2mm}

\noindent
5.27 \hspace{2mm} for each $h, j = 1, \ldots , M$, there exists $r_{h,j} \in (R'', R_1)$ such that \newline
$(\langle t^{(h)}, a^{(j)} \rangle + \overline{\Delta(2 \epsilon)}^{N+m})  \cap (T_0^{\dag} \times \Delta(R_1)^m) \subseteq V_{r_{h,j}}.$

\vspace{3mm}

Fix, for the moment, $h, j \in \{ 1,\ldots,M \}$. Then for each \newline $t \in  T_0^{\dag} \cap (t^{(h)} + \Delta(2\epsilon)^N)$ and each  $z' \in a^{(j)} + \Delta(2\epsilon)^m$
it follows from 5.26, 5.27 and 5.24 that the contour integral
$$(**) \hspace{10mm} \frac{1}{2\pi \text{i}} \int_{C_{r_{h,j}}} \frac{\partial \tilde{G}_t }{\partial z_{m+1}}(z' , z_{m+1}) \cdot (\tilde{G}_t (z' , z_{m+1}))^{-1} \hspace{1mm} dz_{m+1}$$
is well defined. It counts the number of zeros (with multiplicity) of the function $\tilde{G}_t (z' , \cdot)$ lying within
the circle $C_{r_{h,j}}$. Further, by 5.19, 5.24 and 5.27,  the integral is a continuous function of $\langle t, z' \rangle$ in the stated domain,
and so is constant there. Let its value be $q_{h,j}$
and let $Z(t,z') = \langle \rho_{1} (t,z'),\ldots,\rho_{q_{h,j}}(t,z') \rangle$ be a listing of the zeros of
$\tilde{G}_t (z' , \cdot)$ lying within the circle $C_{r_{h,j}}$ (each one counted according to its multiplicity).

Now, for $t \in T^{\dag}_0 \cap  (t^{(h)} + \Delta(2\epsilon)^N)$, $\langle z_2,\ldots,z_{m+1} \rangle \in a^{(j)} + \Delta(2\epsilon)^m$, \newline
$u \in [-1,1]^k \setminus E$,
and each $l=1,\ldots,q_{h,j}$, we  have, by 5.26 and the fact that $r_{h, j} < R_1$, that
$$z_2,\ldots,z_{m+1}, \rho_{l} (t,z_2,\ldots,z_{m+1}) \in \Delta(R_1)$$
and hence that the function
$$L_{t,u}^{h,j} : \C \times (a^{(j)} + \Delta(2\epsilon)^m) \to \C$$
given by

\vspace{2mm}

\noindent
5.28 \hspace{2mm} $L_{t,u}^{h,j}( z_1,z_2,\ldots,z_{m+1}) := \prod_{l=1}^{q_{h,j}} \tilde{H}_u (z_1, z_2,\ldots,z_{m+1}, \rho_l (t,z_2,\ldots,z_{m+1}))$

\vspace{2mm}

\noindent
is a monic polynomial of degree at most $d\cdot q_{h,j}$ in $z_1$  (by 5.23). Note that it is well-defined
since $\tilde{H}_u$ has domain $\C \times \Delta(R')^{m+1}$ and $R_{1} < R'$. (It is certainly possible
that $q_{k,j} = 0$, in which case we intepret the empty product as $1$ and the unique monic polynomial of degree $0$
as the constant function $1$.)

\vspace{2mm}

Now, since $L_{t,u}^{h,j}( z_1,z_2,\ldots,z_{m+1})$ is symmetric in the  $\rho_l (t,z_2,\ldots,z_{m+1})$
(i.e. it does  not depend on our particular ordering of the list $Z(t, z_2,\ldots,z_{m+1})$),
it follows easily that it is a definable function of all the variables $t,u,z_1,\ldots,z_{m+1}$ (restricted to the stated domain) and,
as a standard argument shows, it is holomorphic in $z_1,z_2,\ldots,z_{m+1}$. (Here one uses the generalization of (**) giving the integral
representation of sums of powers of functions of the roots of $\tilde{G}_t (z', \cdot )$. Arbitrary symmetric polynomial combinations of such functions
are then given as polynomials in these power sums.)

We now scale and translate the function $L_{t,u}^{h,j}$ by setting

\vspace{2mm}

\noindent
5.29 \hspace{2mm} $P_{t,u}^{h,j}( z_1,z_2,\ldots,z_{m+1}) := L_{t,u}^{h,j}( z_1,a_1^{(j)} + \epsilon z_2,\ldots, a_{m}^{(j)} + \epsilon z_{m+1})$

\vspace{2mm}

\noindent
so that each $P_{t,u}^{h,j}$ maps $\C \times \Delta(2)^m$ to $\C$, is a monic polynomial of degree at most $dq_{h,j}$
in $z_1$, and is bounded by $K^{q_{h,j}}$. (Notice that this holds true, by our convention concerning the monic polynomial
of degree $0$, even if $q_{h,j} = 0$.)

We now combine the functions $P_{t,u}^{h,j}$  as $h$ and $j$ vary over $\{ 1,\ldots,M \}$. Firstly, for $h \in \{ 1,\ldots,M \}$,
$u \in [-1,1]^k \setminus E$ and $t  \in T^{\dag}_0 \cap  (t^{(h)} + \Delta(2\epsilon)^N)$ define
\vspace{2mm}

\noindent
5.30 \hspace{2mm} $P_{t,u}^{h} := \prod_{j=1}^{M} P_{t,u}^{h,j}$

\vspace{2mm}

\noindent
so that  each $P_{t,u}^{h}$ maps $\C \times \Delta(2)^m$ to $\C$, is a monic polynomial of degree at most $d_h := \sum_{j=1}^{M} dq_{h,j}$
in $z_1$, and is bounded by  $(K+1)^{Mq_{h}}$,
where $q_h := \text{max} \{ q_{h,j} : j = 1,\ldots,M \}$.

Finally, we set

\vspace{2mm}

\noindent
5.31 \hspace{2mm} $\Lambda^{*} := \bigcup_{h=1}^{M} \{ P^h_{t,u} : u \in [-1,1]^k , t \in T_0^{\dag} \cap (t^{(h)} + \Delta(\epsilon )^N ) \}$.

\vspace{2mm}

Then $\Lambda^{*}$ is a $(2, m+1, (K+1)^{Mq})$-family, where $q:= \text{max} \{ q_h : h= 1,\ldots,M \}$,
each element of which is a monic polynomial of degree at most $\text{max} \{ d_h : h=1,\ldots,M \}$
in its first variable.

\vspace{2mm}

We now verify (*) (stated just before 5.11) which will complete the proof. We have
to show that if $u \in [-1,1]^k \setminus E$, then there exists $F \in \Lambda^{*}$ such that
$X_u \subseteq \{ x  \in [-1, 1]^n : \exists w \in [-1, 1]^m \bigwedge_{i=1}^n F(x_i ,w ) =0 \}.$

So let such a $u$ be given. Choose $t^{\dag} \in T_0^{\dag}$ as in 5.22. By 5.25 we may choose
$h \in \{ 1,\ldots, M \}$ such that $t^{\dag} \in t^{(h)} + \Delta(\epsilon )^N$. We let our $F$ be the function
$P_{t^{\dag},u}^h$ (see 5.30), which of course lies in $\Lambda^{*}$ (see 5.31). Now pick any $x = \langle x_1,\ldots,x_n \rangle \in X_u$.
By 5.22 we may pick $v = \langle v' , v_{m+1} \rangle \in (-R'',R'')^{m+1}$ such that  $\bigwedge_{i=1}^n \tilde{H}_{u} (x_i , v)=0 \wedge \tilde{G}_{t^{\dag}}(v)=0$.
By 5.26, there exists $j \in \{ 1,\ldots,M \}$ such that $v' \in a^{(j)} + (-\epsilon,\epsilon)^m$. Now, since $v_{m+1}$ lies within the circle $C_{r_{h,j}}$
and is a zero of the function
$\tilde{G}_{t^{\dag}}(v', \cdot)=0$,
it follows that $q_{h,j} > 0$ and that for some $l=1,\ldots, q_{i,j}$, we have $v_{m+1} = \rho_l (t^{\dag}, v')$.
Thus $\bigwedge_{i=1}^n \tilde{H}_{u} (x_i , v', \rho_l (t^{\dag}, v' ))=0$, and hence $\bigwedge_{i=1}^n L_{t^{\dag},u}^{h,j} (x_i , v') =0$ (see 5.28).
We now choose $w \in [-1,1]^m$ such that $v'= a^{(j)} + \epsilon w$. Then $\bigwedge_{i=1}^n P_{t^{\dag},u}^{h,j} (x_i , w) =0$ (see 5.29). It follows
that $\bigwedge_{i=1}^n P_{t^{\dag},u}^{h} (x_i , w) =0$ (see 5.30), i.e. $\bigwedge_{i=1}^n F(x_i , w) =0$, and we are done.

\bibliographystyle{amsplain}
\bibliography{anbib}

\end{document}